\theoremstyle{plain}
\newtheorem{introtheorem}{Theorem}
\newtheorem{theorem}{Theorem}[section]
\newtheorem{proposition}[theorem]{Proposition}
\newtheorem{lemma}[theorem]{Lemma}
\newtheorem{corollary}[theorem]{Corollary}
\theoremstyle{definition}
\newtheorem{definition}[theorem]{Definition}
\newtheorem{example}[theorem]{Example}
\theoremstyle{remark}
\newtheorem{remark}[theorem]{Remark}
\def\E{{\mathcal E}}
\def\D{{\mathcal D}}
\def\A{{\mathcal A}}
\def\G{{\mathcal G}}
\def\L{{\mathbb L}}
\def\L{{\Bbb  L}}
\def\G{\mathcal G}
\def\cat0{\mathrm{cat}_0}
\def\dim{\mathrm{dim}}
\def\aut{\mathrm{aut}}
\begin{document}

\title[ ]
{On the  GROUP OF SELF-HOMOTOPY EQUIVALENCES OF AN ELLIPTIC SPACE }

\author{Mahmoud Benkhalifa}
\address{Department of Mathematics. Faculty of  Sciences, University of Sharjah. Sharjah, United Arab Emirates}

\email{mbenkhelifa@sharjah.ac.ae}

%\date{\today}

\keywords{Elliptic spaces, 
Group of homotopy self-equivalences, Quillen model, Sullivan model, Whitehead exact sequence}

\subjclass[2000]{ 55P10}
\begin{abstract} 
	Let $X$ be  a simply connected rational  elliptic  space of formal dimension $n$ and let $\E(X)$ denote the group of homotopy classes of self-equivalences of $X$. If $X^{[k]}$ denotes  the $k^{\text{th}}$ Postikov section  of $X$ and $X^{k}$ denotes  its  $k^{\text{th}}$ skeleton, then making use of the models of Sullivan and Quillen we  prove that	$\E(X)\cong\E(X^{[n]})$ and   if $n>m=max\big\{k \,| \,\pi_{k}(X)\neq 0\big\}$ and $\E(X)$ is finite, then $\E(X)\cong\E(X^{m+1})$. Moreover, in case when $X$ is 2-connected,  we show that if  $\pi_{n}(X)\neq0$,  then the group $\E(X)$ is infinite.
\end{abstract}
\maketitle
\section{Introduction}
A simply connected rational topological space $X$ is  elliptic if  both  
	$ H^*(X,\Bbb Q)$ and $\pi_{*}(X)$ are finite  dimensional. Let us call $n = \mathrm{max}\{i:\, H^i(X,\Bbb Q)\neq 0\}$ the formal dimension of $X$. It is well known (\cite{Au}, Theorem 4.2) that a such space satisfies $\pi_{i}(X)=0,$ for $i\geq 2n.$
	
Therefore, if $X^{[k]}$ denotes  the $k^{\text{th}}$ Postikov section  of $X$ and $X^{k}$ denotes  its  $k^{\text{th}}$ skeleton,    on the one hand  $X$ coincides with  $X^{[2n-1]}$ and on the other hand, as its   formal dimension is  $n$, the space  $X$  coincides with  $X^n$.

This paper is directed towards an understanding of the group of homotopy classes of self-equivalences $\E(X)$, where $X$ is a simply connected rational  elliptic  space of formal dimension $n$.  As is well known, the homotopy theory of rational spaces is equivalent,  by  Sullivan's work,  to the homotopy theory of minimal, differential, graded commutative $\Bbb Q$-algebras and by Quillen's work to the homotopy theory of  differential, graded Lie $\Bbb Q$-algebras.  Those  algebras provide an effective algebraic setting to work in, so working algebraically we establish  the following theorem which will be split in Theorem \ref{t0}  and Theorem \ref{t09} later on.
\begin{introtheorem}
Let $X$ be  a simply connected rational  elliptic  space of formal dimension $n$. Then
$$\E(X)= \E(X^{[2n-1]})\cong \E(X^{[2n-2]})\cong\dots\cong\E(X^{[n]})$$
 Moreover if $n>m=max\big\{k \,| \,\pi_{k}(X)\neq 0\big\}$ and  $\E(X)$ is finite, then 
$$\E(X)\cong\dots\cong\E(X^{m+2})\cong\E(X^{m+1}).$$
\end{introtheorem}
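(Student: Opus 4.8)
The plan is to establish both displayed chains one stage at a time, so that everything reduces to two local comparison statements. Using $X=X^{[2n-1]}=X^{n}$ from the introduction, it suffices to prove that $\E(X^{[k]})\cong\E(X^{[k-1]})$ for every $k$ with $n<k\le 2n-1$, and (under the finiteness hypothesis) that $\E(X^{k})\cong\E(X^{k-1})$ for every $k$ with $m+1<k\le n$. The two chains are dual, and I would treat them with the two dual models: Postnikov truncations are visible in the Sullivan minimal model, while skeleta are visible in the Quillen model. Throughout I would use the identification of $\E$ of a stage with the group of homotopy classes of automorphisms of its model.

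For the Postnikov step, fix a minimal Sullivan model $(\Lambda V,d)$ of $X$, so $V^{i}\cong\Hom(\pi_{i}(X),\Q)$ and $X^{[k]}$ is modelled by $(\Lambda V^{\le k},d)$. Since $X$ is simply connected the generators sit in degrees $\ge 2$ and $d$ is decomposable, whence $d(V^{k})\subseteq\Lambda V^{\le k-1}$ and every endomorphism of $\Lambda V^{\le k}$ preserves $\Lambda V^{\le k-1}$; restriction therefore yields a homomorphism $r_{k}\colon\E(X^{[k]})\to\E(X^{[k-1]})$. For surjectivity one must extend an automorphism $\psi$ of $\Lambda V^{\le k-1}$ over $V^{k}$, i.e.\ solve $d\tilde\psi(v)=\psi(dv)$ for $v\in V^{k}$; the obstruction is the class of $\psi(dv)$ in $H^{k+1}(\Lambda V^{\le k-1})$. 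Here the formal dimension enters decisively: because $k+1>n$ one has $H^{k+1}(X)=0$, and since degree $k+1$ cohomology of any later stage can only be created or annihilated by generators of degree $k$, the classes $\{[dv]:v\in V^{k}\}$ already span all of $H^{k+1}(X^{[k-1]})$. Thus $\psi(dv)$ is automatically a combination of such classes modulo coboundaries, the lift exists, and $r_{k}$ is onto.

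Injectivity of $r_{k}$ is where the subtler point lies: an automorphism restricting to the identity on $\Lambda V^{\le k-1}$ deviates from the identity on $V^{k}$ by a cocycle-valued map whose class measures the difference, and the content is that this residual (nilpotent) deviation is absorbed by the homotopy relation. The crucial observation is that the cohomology in degrees $k$ and $k+1$ controlling these deviations is unchanged by the adjunction of $V^{k}$ precisely because $k>n$; hence no new nilpotent self-equivalences appear, and $r_{k}$ is an isomorphism with no finiteness hypothesis needed. For the skeletal step I would run the Quillen-model mirror of this argument, using $(\L(W),\partial)$ with $W_{i}\cong\tilde H_{i+1}(X;\Q)$ and $H_{i}(\L(W),\partial)\cong\pi_{i+1}(X)\otimes\Q$, in which $X^{k}$ is modelled by $(\L(W_{\le k-1}),\partial)$; comparing $\E(X^{k})$ with $\E(X^{k-1})$ is governed by the homology $H_{*}(\L(W))$, i.e.\ by $\pi_{*}(X)$, in the relevant degrees. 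For $m<k\le n$ one has $\pi_{k}(X)=0$, so there is no new rational homotopy to create genuinely new self-equivalences; the only possible discrepancy is a torsion-free nilpotent contribution from the added cells, and this is exactly where the hypothesis that $\E(X)$ is finite is used, since a finite torsion-free nilpotent group is trivial.

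I expect the main obstacle to be the injectivity/nilpotent half of each step rather than the lifting half: showing that the automorphisms acting trivially on homotopy (the nilpotent part of $\E$) are stable under adjoining a generator of degree $k>n$ on the Sullivan side, and controlling their growth on the Quillen side so that the finiteness hypothesis can be invoked only for the skeletal chain. The clean tools for this bookkeeping are the description of $\E$ as $\aut(\text{model})$ modulo homotopy together with the Whitehead exact sequence of the model, which organises the groups $H^{k},H^{k+1}$ (respectively the homotopy and homology in the relevant degrees) into which all the obstructions and indeterminacies fall.
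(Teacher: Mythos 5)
Your strategy is essentially the paper's: both chains are handled one stage at a time, the Postnikov chain through the Sullivan model and the skeletal chain through the Quillen model, with each stage controlled by exactly the short exact sequences the paper imports as Theorem~\ref{t8} and Theorem~\ref{t08} (kernel the deviation group $\Hom(V^q,H^q(\Lambda V^{\leq q-1}))$, resp.\ $\Hom(W_q,H_q(\L(W_{\leq q-1})))$; quotient the group of compatible pairs, which collapses to $\E$ of the lower stage precisely when the Whitehead map $b$ is an isomorphism). Your account of where finiteness enters is the right one and is in fact more explicit than the paper's: at the lower skeletal steps the kernel $\Hom(W_k,H_k(\L(W_{\leq k-1})))$ need not vanish for degree reasons (one only gets $H_k(\L(W_{\leq k-1}))\cong W_{k+1}$ there), and it is killed only because it is a $\Q$-vector space sitting inside a group already known to be finite. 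The surjectivity half of your Postnikov step is also correct: $H^{k+1}(X;\Q)=0$ for $k\geq n$ forces $b^k$ to be onto (and $H^k(X;\Q)=0$ forces it to be injective) via the Whitehead sequence, so every automorphism of the lower stage extends.

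The one place your argument as written does not close is the injectivity of $r_k$. What has to be shown is that the deviation group $\Hom(V^k,H^k(\Lambda V^{\leq k-1}))$ is \emph{zero} for $k>n$, and ``the cohomology controlling these deviations is unchanged by the adjunction of $V^k$'' does not deliver that: unchanged is not trivial. The fact you need is the outright vanishing $H^k(\Lambda V^{\leq k-1})=0$ for $k>n$, which does hold --- adjoining generators of degree $\geq k$ creates no new cochains in degree $\leq k$ and no new coboundaries in degree $k$, so $H^k(\Lambda V^{\leq k-1})$ injects into $H^k(\Lambda V)=H^k(X;\Q)=0$ --- but this line must be supplied; it is the analogue of the paper's Lemma~\ref{l02}, which the paper proves using in addition the ellipticity constraint $V^{\mathrm{even},>n}=0$ because it descends two degrees at a time through the odd generators. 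Once that vanishing is in place your Postnikov chain is complete without any finiteness hypothesis, in agreement with Theorem~\ref{t2}, and the skeletal chain goes through as you describe, in agreement with Theorem~\ref{t09}.
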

\medskip
The idea of  getting some information regarding the (in)finiteness of the groups $\E(X)$  within the framework of Sullivan model   traces back to the results of Arkowitz and Lupton \cite{AL}  in which they exhibited conditions under which $\E(X)$  is finite or infinite,  where $X$ is a rational space having a 2-stage Postnikov-like decomposition (for example, rationalizations of homogeneous spaces). In the same spirit we establish the following result  which will be Theorem \ref{t06} later on.
\begin{introtheorem}
	Let $X$ be  a $\rm{2}$-connected rational  elliptic  space of formal dimension $n$.  If   $\pi_{n}(X)\neq0$,  then the group $\E(X)$ is infinite.
\end{introtheorem}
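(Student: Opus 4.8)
The plan is to pass to the algebra and exhibit an explicit one-parameter family of self-equivalences. First I would invoke the (unconditional half of the) first theorem to replace $X$ by its $n^{\text{th}}$ Postnikov section: since $\E(X)\cong\E(X^{[n]})$, it suffices to prove that $\E(X^{[n]})$ is infinite. Writing $(\Lambda V,d)$ for the minimal Sullivan model of $X$, the model of $X^{[n]}$ is $(\Lambda V^{\le n},d)$, and because $X$ is $2$-connected every generator sits in degree $\ge 3$; hence every decomposable element has degree $\ge 6$. Two consequences of this, together with $n$ being the formal dimension, drive the argument. Since $\pi_{n}(X)\ne 0$ we may pick a generator $v\in V^{n}$; a monomial containing $v$ together with any further generator has degree $\ge n+3$, whereas $d$ raises degree by one and a minimal model has no linear part, so $v$ occurs in $dx$ for no generator $x\in V^{\le n}$. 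Likewise $dv\in\Lambda^{\ge 2}(V^{\le n-2})$.

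The heart of the matter is the following claim: the generator $v\in V^{n}$ can be chosen with $dv=0$, and then $[v]$ generates $H^{n}(X)\cong\Q$. I expect this to be the main obstacle, and it is exactly where ellipticity (as opposed to mere finiteness of $\pi_{*}$) is used. The point is that, although a generator of top \emph{homotopical} degree $2n-1$ may well fail to be a cocycle, a generator living in degree equal to the formal dimension cannot: dualizing to the homotopy Lie algebra $L=\pi_{*}(\Omega X)\otimes\Q$ of Quillen's model, the closedness of $v$ amounts to the dual class in $L_{n-1}$ being indecomposable, and Poincaré duality for the elliptic space $X$ forces the fundamental class in $H^{n}(X)\cong\Q$ to be carried by such an indecomposable cocycle. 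Equivalently, one checks against the formal-dimension identity $n=\sum_{j}|y_{j}|-\sum_{i}(|x_{i}|-1)$ (summed over odd, resp. even, generators) that assuming every element of $V^{n}$ non-closed is incompatible with $\dim H^{*}(X;\Q)<\infty$; this is the computational core I would carry out, and it is robust enough to survive the higher-bracket corrections to the quadratic part of $d$.

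Granting the claim, the conclusion is immediate. Decompose $V^{\le n}=\Q v\oplus V'$ and, for $t\in\Q^{*}$, define an automorphism $\phi_{t}$ by
$$\phi_{t}(v)=t\,v,\qquad \phi_{t}|_{V'}=\mathrm{id}.$$
Then $\phi_{t}$ commutes with $d$: on $V'$ the differentials lie in $\Lambda V^{\le n-2}$ and never involve $v$, so $\phi_{t}(dx)=dx=d\phi_{t}(x)$, while $d\phi_{t}(v)=t\,dv=0=\phi_{t}(dv)$ because $dv=0$. Hence each $\phi_{t}$ is an automorphism of $(\Lambda V^{\le n},d)$, and its induced action on the indecomposables $V^{\le n}$—that is, on $\pi_{*}(X^{[n]})\otimes\Q$—is multiplication by $t$ on the line $\Q v\subseteq V^{n}$ and the identity elsewhere. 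Since the induced map on rational homotopy groups is a homotopy invariant, distinct values of $t$ give pairwise non-homotopic self-equivalences; thus $t\mapsto[\phi_{t}]$ embeds the infinite set $\Q^{*}$ into $\E(X^{[n]})\cong\E(X)$, and $\E(X)$ is infinite. The same family is visible through the Whitehead exact sequence as a nonzero class in the degree-zero derivation homology $H_{0}(\Der(\Lambda V^{\le n}),[d,-])$, namely the cycle sending $v\mapsto v$ and $V'\mapsto 0$, whose being a cycle is precisely the content of $dv=0$.
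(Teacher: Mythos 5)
Your reduction to $\E(X^{[n]})=\E(\Lambda V^{\le n})$ and the degree bookkeeping (no $dx$ with $x\in V^{\le n}$ can involve $v$, so scaling a \emph{closed} top-degree generator would indeed give a one-parameter family detected on indecomposables) are fine. The gap is exactly where you flagged it: the claim that some generator $v\in V^{n}$ can be chosen with $dv=0$ is false, and the Poincar\'e-duality heuristic offered for it does not hold, because the fundamental class of an elliptic space can be decomposable. Concretely, take $X=S^{3}\times S^{4}$ (rationalized): it is $2$-connected, elliptic of formal dimension $n=7$, with $\pi_{7}(X)\otimes\Q\cong\Q$ (the Hopf class), and minimal model $\Lambda(x_{3},y_{4},v_{7})$ with $dx=dy=0$, $dv=y^{2}$. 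Since $y^{2}$ is not exact in $\Lambda(x,y)$, the map $b^{7}$ is injective and no change of generators makes $v$ closed; the fundamental class is the decomposable $[xy]$, and your family $\phi_{t}$ does not exist ($\phi_{t}(dv)=y^{2}\neq t\,y^{2}=d\phi_{t}(v)$ for $t\neq1$). Via the Whitehead sequence (\ref{17}), the existence of a closed generator in $V^{n}$ is \emph{equivalent} to $H^{n}(\Lambda V^{\le n-1})=0$ (this group injects into $H^{n}(\Lambda V)\cong\Q$), and that vanishing can fail; your "computational core" against the formal-dimension identity would therefore not go through.

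The paper instead argues by contradiction in a way that covers both cases. Assuming $\E(X)$ finite, the kernel $\mathrm{Hom}\big(V^{n},H^{n}(\Lambda V^{\le n-1})\big)$ of the extension (\ref{06}) from \thmref{t8} must vanish, which, since $V^{n}\neq0$, forces $H^{n}(\Lambda V^{\le n-1})=0$; this vanishing is transported to the Quillen model via \thmref{00}, where it makes $\mathcal{R}^{n-1}_{n-2}$ the full product $\aut(W_{n-1})\times\E(\L(W_{\le n-3}))$, which is infinite because $W_{n-1}\cong H_{n}(X;\Q)\neq0$ --- a contradiction. Your scaling family is morally the $\aut(W_{n-1})\cong GL_{1}(\Q)$ factor seen on the Lie side; on the Sullivan side the analogous family only becomes available \emph{after} the finiteness assumption has killed $H^{n}(\Lambda V^{\le n-1})$ (in the $S^{3}\times S^{4}$ example the infinitude of $\E$ comes instead from $y\mapsto\mu y$, $v\mapsto\mu^{2}v$, or from the kernel classes $v\mapsto v+c\,xy$). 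You could repair your argument while staying on the Sullivan side by running it as a contradiction: if $\E(X)$ were finite, then $H^{n}(\Lambda V^{\le n-1})=0$, hence a closed generator $v\in V^{n}$ does exist, and your $\phi_{t}$ yields infinitely many classes. As written, with closedness of $v$ asserted unconditionally, the proof breaks.
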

\medskip

In \cite{CV},  Costoya and Viruel  proved the remarkable
result that every finite group $G$ occurs as $G= \E(X)$ for some  elliptic rational space  $X$ having  formal dimension $n=208+80|V|$,  where  $V$ is a certain finite graph associated with $G$ and $|V|$ denotes  the order of $V$. The space $X$ is   constructed such that  $\pi_{k}(X)= 0$ for all $k\geq 120$. As a consequence of the main theorem of this paper we show that   every finite group $G$ can be realised by a rational space $X$ whose formal dimension does not depend on the order of $G$. Precisely,  we can ameliorate Costoya and Viruel  theorem by showing that $G= \E(X)$ for some   rational space  $X$ having  formal dimension $n=120$. 

\medskip

For a  space $X$, let  $[X,X]$ denote the monoid of homotopy classes of  self-maps of the  space $X$ and let
$$\A_{\#}^{k}(X)=\Big\{f\in[X,X]\,\mid\,\pi_{i}(f):\pi_{i}(X)\overset{\cong}{\longrightarrow} \pi_{i}(X) \text{ for any } i\leq k\}$$ 
In \cite{CL}, Choi and Lee  introduced the concept of the  self-closeness number  defined as follows
	$$N\E(X)=min\Big\{k\,\mid\,\A_{\#}^{k}(X)=\E(X)\Big\}$$
Notice that the study of this numerical  homotopy invariant by means of  Sullivan model in algebraic setting  is an  interesting  problem (see for instance \cite{OY}).  From the main result in this paper we establish the following result  which will be Theorem \ref{t} later on.
\begin{introtheorem}
	If $X$  is a simply connected rational  elliptic  space of formal dimension $n$, then 
	$$N\E(X)\leq n$$
\end{introtheorem}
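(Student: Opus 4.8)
The plan is to reduce the inequality to a statement about the minimal Sullivan model and then to propagate an isomorphism from low to high degrees. Since any self-equivalence induces isomorphisms on every homotopy group, one always has $\E(X)\subseteq\A_{\#}^{n}(X)$, so $N\E(X)\le n$ is equivalent to the reverse inclusion $\A_{\#}^{n}(X)\subseteq\E(X)$: every self-map $f$ of $X$ for which $\pi_{i}(f)$ is an isomorphism for all $i\le n$ must already be a homotopy equivalence. Passing to the minimal Sullivan model $(\Lambda V,d)$ of $X$, a self-map $f$ corresponds to a morphism $\phi\colon(\Lambda V,d)\to(\Lambda V,d)$ whose linear part $\phi_{1}\colon V\to V$ represents $\pi_{*}(f)$ under the identification $V^{i}\cong\Hom(\pi_{i}(X),\Q)$; moreover, by minimality, $\phi$ is an isomorphism precisely when $\phi_{1}$ is. Thus the theorem amounts to the implication: if $\phi_{1}$ is an isomorphism on $V^{i}$ for all $i\le n$, then $\phi_{1}$ is an isomorphism on all of $V$.

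The two structural facts I would use are specific to elliptic spaces. First, the even-degree generators of an elliptic minimal model all lie in degrees $\le n$, so $V^{i}$ consists entirely of odd generators for every $i>n$. Second, $d$ is injective on $V^{i}$ for each $i>n$: if $y\in V^{i}$ were a cocycle then its class would lie in $H^{i}(X;\Q)=0$ (as $i$ exceeds the formal dimension), forcing $y\in\im d\subseteq\Lambda^{\ge 2}V$, which is impossible for a nonzero generator by minimality. With these in hand I would argue by induction on the degree $i$, from $i=n+1$ up to the top degree $2n-1$. Assuming $\phi_{1}$ is an isomorphism on $V^{j}$ for all $j<i$, the chain-map identity $\phi(dy)=d\,\phi(y)$ for $y\in V^{i}$ expresses $d\,\phi_{1}(y)$ in terms of $\phi$ on generators of degree $<i$ (indeed $dy$ only involves such generators), where everything is already controlled; combined with the injectivity of $d$ on $V^{i}$ this should pin down $\phi_{1}|_{V^{i}}$ as an isomorphism. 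This is exactly the algebraic shadow of climbing the Postnikov tower of $X$ from level $n$ to level $2n-1$, which is the mechanism already exploited in the first Theorem of the Introduction to prove $\E(X)\cong\E(X^{[n]})$.

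The main obstacle is the inductive step, where the decomposable part of $\phi(y)$ and the higher word-length parts of $dy$ interact: the quadratic part of $d$ alone — that is, the dual Whitehead bracket on $\pi_{*}(X)$ — does not determine $\phi_{1}$ on the top odd generators, as already the model of $\mathbb{C}P^{k}$ shows, where $dy=x^{k+1}$ and the forced relation $\phi_{1}(y)=\lambda^{k+1}y$ is only visible through the length-$(k+1)$ part of the differential. Handling this honestly requires the injectivity of $d$ on $V^{>n}$, which is where ellipticity and the formal-dimension input $H^{>n}(X;\Q)=0$ enter decisively. An equivalent route, useful for cross-checking the delicate degrees, is homotopy-theoretic: the truncation $f^{[n]}$ is a self-equivalence of $X^{[n]}$, and under the isomorphism $\E(X)\cong\E(X^{[n]})$ of the Main Theorem one lifts it back up the tower, the obstructions to recognising $f$ as an equivalence lying in $H^{i}(X;\pi_{i}(X)\otimes\Q)$ for $i>n$, which vanish since $X$ has formal dimension $n$; the borderline degree $i=n+1$ is precisely the subtle point that the elliptic structure resolves. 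Once $\phi_{1}$, equivalently $\pi_{*}(f)$, is an isomorphism in all degrees, $f$ is a rational homotopy equivalence, giving $\A_{\#}^{n}(X)\subseteq\E(X)$ and hence $N\E(X)\le n$.
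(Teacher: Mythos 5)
Your proposal is correct in outline and rests on the same mechanism as the paper's proof: reduce to the minimal Sullivan model, where $N\E(X)\le n$ becomes the statement that a self-map whose linear part is an isomorphism on $V^{\le n}$ is automatically an isomorphism on all of $V$, and then climb from degree $n+1$ up to $2n-1$ using ellipticity. This is exactly what the paper's Corollary~\ref{c3} and Corollary~\ref{c4} encode (no even generators above the formal dimension, and $b^{i}$ an isomorphism for odd $i>n$), and what the paper packages through Theorem~\ref{t0} to obtain $\E(\Lambda V)\cong\E(\Lambda V^{\le n})$; your version just runs the induction directly on a given $\phi$. The one point where your argument as stated is too weak is the inductive step: injectivity of $d$ on $V^{i}$ is not quite the right hypothesis. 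From $\phi_{1}(y)=0$ you get $\phi(dy)=dw$ with $w$ the decomposable part of $\phi(y)$, and since $\phi$ restricts (by induction and minimality) to an isomorphism $\psi$ of $\Lambda V^{\le i-1}$, this says $dy=d(\psi^{-1}w)$ with $\psi^{-1}w$ decomposable --- i.e.\ that the class $b^{i}(y)=[dy]\in H^{i+1}(\Lambda V^{\le i-1})$ vanishes, not that $dy=0$. So what you actually need is injectivity of $b^{i}$ for $i>n$ (the paper's Corollary~\ref{c4}). Fortunately this follows by the same one-line argument you give for $d$: if $dy=dz$ with $z$ decomposable of degree $i>n$, then $y-z$ is a cocycle in degree above the formal dimension, hence exact, hence decomposable by minimality, forcing $y=0$. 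With that correction your induction closes and the argument coincides with the paper's.
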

\begin{remark}
Theorem 3 is a weaker version of (\cite{CL}, Theorem 2) in which the same result is proved,  using topological arguments,  for a  CW-complex  $X$ of dimension $n$. Our proof is based on the analysis of the Sullivan model of $X$.
\end{remark}

The paper is organized as follows.
In section 2, we recall the basic properties of the Quillen model and Sullivan model in rational homotopy theory, the Whitehead  exact sequences  as well as the important properties of elliptic spaces. Then   we formulate and prove the main theorems in the algebraic setting. In section 3, a mere transcription of the above results in the topological context is given and some examples are provided  illustrating our results.
\section{main results}
\subsection{Quillen model and Sullivan model in rational homotopy theory}
We briefly recall  Quillen's   differential graded Lie algebra and Sullivan's commutative differential algebra  frameworks  for rational homotopy theory. All the materials can be founded \cite{Au,FHT}.

 If $X$ is a simply connected rational  CW complex of finite type,  then there exists a free commutative cochain algebra $(\Lambda
V,\partial)$  called  the  Sullivan model of $X$, unique up to isomorphism, which determines completely the homotopy type of the space $X$. Moreover Sullivan  model recovers homotopy data via the identifications$\colon$
$$\mathrm{Hom}\big(\pi_{*}(X),\Bbb Q\big)\cong V^{*} \ \  \hbox{,} \ \  H^*(X; \Bbb Q) \cong H^{*}(\Lambda
V,\partial)\ \  \hbox{and} \ \  \E(X) \cong \aut(\Lambda
	V,\partial)/\simeq, $$
where $\aut(\Lambda
	V,\partial)/\simeq$  is the group  of  homotopy  cochain self-equivalences  of  $(\Lambda
V,\partial)$ modulo the relation of homotopy between free commutative cochain algebras (see \cite{FHT}).   We write $$\E( \Lambda V) =   \aut(\Lambda V, \partial)/\simeq$$
for this group.

\bigskip

Dually, if $X$ is a simply connected rational  CW complex of finite type,  then there exists a differential graded Lie algebra $(\L(W),\delta)$ called the Quillen model of $X$,  unique up to isomorphism, which determines completely the homotopy type of the space $X$.   The Quillen  model recovers homotopy data via the identifications$\colon$
$$\pi_*(X)  \cong H_{*-1}(\L(W))  \ \  \hbox{and} \ \  H_*(X,\Bbb Q) \cong W_{*}.$$
Quillen's theory directly implies an identification    $$\E(X) \cong \aut(\L(W))/\simeq, $$    where the latter  is the group    of  homotopy differential graded  Lie self-equivalences of  $(\L(W), \delta)$ modulo the relation of  homotopy between differential graded  Lie automorphisms   (see \cite[pp.425-6]{FHT}).   We write $$\E(\L(W)) =   \aut(\L(W), \partial)/\simeq$$
for this group.
\begin{definition}[\cite{Benk3}, Definition 2.6]\label{d3}
	Given a simply connected free commutative cochain algebra  $(\Lambda (V^q\oplus V^{\leq n}),\partial)$, where  $q>n$ and   let   $b^{q}:V^q\to H^{q+1}(\Lambda(V^{\leq n}))$ be the linear map  defined as follows
	\begin{equation}
	b^{q}(v)=[\partial(v)]\,\,\,\,\,\,,\,\,\,\,\, v\in V^{q}.\label{37}
	\end{equation}
	Here $[\partial(v)]$  denotes
	the cohomology class of $\partial(v)\in
	(\Lambda V^{\leq n})^{q+1}$.

\medskip 

	We define  $\mathcal{D}^{q}_{n}$ to be the subgroup   of $\aut(V^{q})\times \mathcal{E}(\Lambda V^{\leq
		n})$ consisting of the pairs $(\xi,[\alpha])$  making  the following
	diagram commutes
\begin{equation}\label{13}
\begin{picture}(300,90)(0,30)
\put(60,100){$V^{q}\hspace{1mm}\vector(1,0){153}\hspace{1mm}V^{q}$}
\put(69,76){\scriptsize $b^{q}$} \put(238,76){\scriptsize $b^{q}$}
\put(66,96){$\vector(0,-1){38}$} \put(235,96){$\vector(0,-1){38}$}
\put(155,103){\scriptsize $\xi$} \put(145,52){\scriptsize
	$H^{q+1}(\alpha)$} \put(35,46){$H^{q+1}(\Lambda V^{\leq n})
	\hspace{1mm}\vector(1,0){110}\hspace{1mm}H^{q+1}(\Lambda V^{\leq n})
	\hspace{1mm}$}
\end{picture}
\end{equation}
\end{definition}
\begin{theorem}[\cite{Benk3}, Theorem 1.1]\label{t8}
	There exists a 
  short exact sequence of groups
	\begin{equation}\label{34}
	\mathrm{Hom}\big(V^q, H^{q}(\Lambda(V^{\leq n}))\big) \rightarrowtail
	\mathcal{E}(\Lambda(V^{q}\oplus V^{\leq n}))\overset{\Psi}{
		\twoheadrightarrow}\D^{q}_{n}
	\end{equation}
	where $\Psi([\alpha])=(\widetilde{\alpha}^{q},[\alpha_{n}])$.  Here $\widetilde{\alpha}^{q}:V^q\to V^q$ is the isomorphism induced by $\alpha$ on the indecomposables and $\alpha_{n}$ is the restriction of $\alpha$ to $\Lambda V^{\leq n}$.
\end{theorem}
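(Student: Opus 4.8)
The plan is to define the map $\Psi$ at the level of honest cochain automorphisms, show it takes values in $\D^q_n$, check that it descends to homotopy classes and is a group homomorphism, and then prove surjectivity and identify the kernel. The organizing principle throughout is the degree constraint $q>n$: since the algebra is simply connected and generated only in degrees $\leq n$ and $=q$, every automorphism $\alpha$ of $(\Lambda(V^q\oplus V^{\leq n}),\partial)$ preserves the sub-cochain-algebra $\Lambda V^{\leq n}$ (an element of degree $\leq n$ cannot involve a generator of degree $q$), so $\alpha_n:=\alpha|_{\Lambda V^{\leq n}}$ is defined; and for $v\in V^q$ one has $\alpha(v)=\widetilde{\alpha}^{q}(v)+\omega_v$ with $\widetilde{\alpha}^{q}(v)\in V^q$ the linear part and $\omega_v\in(\Lambda^{\geq 2}V^{\leq n})^q$, again because a decomposable of degree $q$ can only involve generators from $V^{\leq n}$.

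First I would check $(\widetilde{\alpha}^{q},[\alpha_n])\in\D^q_n$. Applying $\alpha\partial=\partial\alpha$ to $v\in V^q$ and using $\partial v\in(\Lambda V^{\leq n})^{q+1}$ gives $\partial\widetilde{\alpha}^{q}(v)+\partial\omega_v=\alpha_n(\partial v)$; passing to $H^{q+1}(\Lambda V^{\leq n})$ the term $\partial\omega_v$ dies, yielding $b^{q}(\widetilde{\alpha}^{q}(v))=H^{q+1}(\alpha_n)(b^{q}(v))$, which is exactly the commutativity of the diagram \eqref{13}. That $\Psi$ is a homomorphism is immediate, since linear parts and restrictions both compose. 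The point needing care is that $\Psi$ descends to $\E(\Lambda(V^q\oplus V^{\leq n}))$: the linear part $\widetilde{\alpha}^{q}$ is a homotopy invariant because $\partial$ is decomposable on $V^q$ (as $q+1>n$, there are no generators in degree $q+1$), so a derivation homotopy contributes nothing to the degree-$q$ linear part; and for the restriction one uses that the inclusion $i:\Lambda V^{\leq n}\hookrightarrow\Lambda(V^q\oplus V^{\leq n})$ induces a bijection on homotopy classes of maps out of $\Lambda V^{\leq n}$ (it is an isomorphism on indecomposables and cohomology in degrees $\leq n$, while the source carries no generator of degree $q$), so composing a homotopy $\alpha\simeq\alpha'$ with $i$ and cancelling $i$ gives $\alpha_n\simeq\alpha'_n$.

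For surjectivity, take $(\xi,[\alpha_0])\in\D^q_n$ with $\alpha_0$ an honest automorphism of $\Lambda V^{\leq n}$ representing $[\alpha_0]$. I would set $\alpha=\alpha_0$ on $\Lambda V^{\leq n}$ and, on each $v\in V^q$, define $\alpha(v)=\xi(v)+\omega_v$ where $\omega_v\in(\Lambda V^{\leq n})^q$ is chosen to solve $\partial\omega_v=\alpha_0(\partial v)-\partial\xi(v)$. This is solvable precisely because the cohomology class of the right-hand side equals $H^{q+1}(\alpha_0)(b^{q}(v))-b^{q}(\xi(v))$, which vanishes by the defining condition of $\D^q_n$. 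The resulting $\alpha$ commutes with $\partial$ by construction, is an automorphism since it is an isomorphism on indecomposables (namely $\xi$ on $V^q$ and the linear part of $\alpha_0$ on $V^{\leq n}$), and satisfies $\Psi([\alpha])=(\xi,[\alpha_0])$.

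Finally I would compute the kernel. Given $[\alpha]$ with $\widetilde{\alpha}^{q}=\mathrm{id}$ and $\alpha_n\simeq\mathrm{id}$, the homotopy extension property of the relative Sullivan inclusion $i$ lets me replace $\alpha$ by a homotopic automorphism with $\alpha_n=\mathrm{id}$ on the nose; then $\alpha(v)=v+\omega_v$ with $\omega_v$ a cocycle (since $\partial\alpha(v)=\partial v$ forces $\partial\omega_v=0$), and $v\mapsto[\omega_v]$ defines $\Theta([\alpha])\in\Hom(V^q,H^{q}(\Lambda V^{\leq n}))$. A direct computation, using that $\alpha$ fixes $\Lambda V^{\leq n}$, shows $\Theta$ is a homomorphism, and surjectivity is clear since any prescribed cocycles $\omega_v$ assemble into such an $\alpha$. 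The one genuinely homotopy-theoretic step, which I expect to be the main obstacle, is injectivity: I must show that if every $\omega_v$ is a coboundary, say $\omega_v=\partial\eta_v$ with $\eta_v\in(\Lambda V^{\leq n})^{q-1}$, then $\alpha\simeq\mathrm{id}$. This I would establish by constructing an explicit $\Lambda(t,dt)$-homotopy (equivalently, the derivation homotopy $h$ vanishing on $\Lambda V^{\leq n}$ with $h(v)=\eta_v$, for which $\mathrm{id}+\partial h+h\partial$ reproduces $\alpha$ on generators), and the well-definedness of $\Theta$ on the normalized representative reduces to the same coboundary analysis. Combining the four steps yields the short exact sequence \eqref{34}.
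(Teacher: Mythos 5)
The paper itself gives no proof of this statement: Theorem~\ref{t8} is quoted verbatim from \cite{Benk3} (Theorem 1.1), so there is no internal argument to compare against. Your reconstruction is correct and follows the standard route of that reference: split an automorphism of the cell attachment $\Lambda V^{\leq n}\hookrightarrow \Lambda(V^{q}\oplus V^{\leq n})$ into its restriction, its linear part on $V^{q}$, and a perturbation by degree-$q$ cocycles of $\Lambda V^{\leq n}$, with the condition defining $\D^{q}_{n}$ appearing exactly as the obstruction to realizing a pair $(\xi,[\alpha_0])$. One step deserves tightening: for the homotopy invariance of $\widetilde{\alpha}^{q}$ the relevant hypothesis is not that $\partial$ be decomposable on $V^{q}$ but that the linear part of $\partial$ vanish on $V^{q-1}$, so that $\partial h(v)$ contributes nothing linear in degree $q$; this is automatic when $q-1>n$ (since then $V^{q-1}=0$), but when $q=n+1$ it requires minimality of the model. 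The same minimality is used silently when you assert $h(\partial x)=0$ for $x\in V^{\leq n}$ in the final homotopy construction. Since every Sullivan algebra in this paper is minimal, these are harmless omissions, but the hypothesis should be stated.
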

\begin{corollary}\label{c1}
Assume that the linear map  $b^{q}$ is an isomorphism, then 
	$$\D^{q}_{n}\cong  \mathcal{E}(\Lambda V^{\leq
		n})$$
\end{corollary}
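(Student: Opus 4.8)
The plan is to show that the projection onto the second factor,
$$ p \colon \D^{q}_{n} \longrightarrow \E(\Lambda V^{\leq n}), \qquad (\xi, [\alpha]) \longmapsto [\alpha], $$
is a group isomorphism. Since by \thmref{t8} the group law on $\D^{q}_{n}$ is inherited componentwise from $\aut(V^{q}) \times \E(\Lambda V^{\leq n})$, the map $p$ is at once a group homomorphism, so the whole matter reduces to checking that $p$ is bijective. The key observation is that once $b^{q}$ is invertible, the commutativity condition $b^{q} \circ \xi = H^{q+1}(\alpha) \circ b^{q}$ defining the elements of $\D^{q}_{n}$ pins down $\xi$ completely in terms of $[\alpha]$.

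For surjectivity, I would start from an arbitrary $[\alpha] \in \E(\Lambda V^{\leq n})$. Because $\alpha$ is a cochain self-equivalence, the induced map $H^{q+1}(\alpha)$ is an automorphism of $H^{q+1}(\Lambda V^{\leq n})$, and it depends only on the class $[\alpha]$, since homotopic morphisms induce the same map in cohomology. Setting
$$ \xi \;=\; (b^{q})^{-1} \circ H^{q+1}(\alpha) \circ b^{q} $$
produces an automorphism of $V^{q}$, being a composite of isomorphisms by the hypothesis that $b^{q}$ is an isomorphism, and by construction this $\xi$ makes the defining square commute. Hence $(\xi, [\alpha]) \in \D^{q}_{n}$ and $p(\xi, [\alpha]) = [\alpha]$.

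For injectivity, suppose $(\xi_{1}, [\alpha])$ and $(\xi_{2}, [\alpha])$ both lie in $\D^{q}_{n}$. Each $\xi_{i}$ satisfies $b^{q} \circ \xi_{i} = H^{q+1}(\alpha) \circ b^{q}$; cancelling the isomorphism $b^{q}$ on the left forces $\xi_{1} = (b^{q})^{-1} \circ H^{q+1}(\alpha) \circ b^{q} = \xi_{2}$. Thus the fibres of $p$ are singletons and $p$ is injective; equivalently, the kernel of $p$ is trivial, since any $(\xi, [\mathrm{id}]) \in \D^{q}_{n}$ must have $\xi = (b^{q})^{-1} \circ \mathrm{id} \circ b^{q} = \mathrm{id}$.

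I do not expect a genuine obstacle here: the entire content is the invertibility of $b^{q}$, which converts the commuting-square constraint into the explicit formula for $\xi$, yielding existence (surjectivity) and uniqueness (injectivity) simultaneously. The only points that warrant a line of care are the well-definedness of $H^{q+1}(\alpha)$ on homotopy classes and the verification that the $\xi$ produced in the surjectivity step is genuinely invertible; both follow immediately from standard properties of cochain equivalences together with the hypothesis on $b^{q}$.
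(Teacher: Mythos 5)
Your argument is correct and is essentially the paper's own proof viewed from the other direction: the paper defines the map $[\alpha]\mapsto\bigl((b^q)^{-1}\circ H^{q+1}(\alpha)\circ b^q,[\alpha]\bigr)$ and declares it an isomorphism, while you verify that its inverse, the projection onto the second factor, is bijective using the very same formula for $\xi$. No substantive difference.
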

\begin{proof}
	As  $b^{q}$ is an isomorphism, then from the commutative diagram (\ref{13}) we deduce that $\xi=(b^q)^{-1}\circ H^{q+1}(\alpha)\circ b^q$. Therefore the map
	$$ \mathcal{E}(\Lambda V^{\leq
		n})\to \D^{q}_{n} \,\,\,\,\,\,\,\,\,\,\,\,\,\,\,\,\,,\,\,\,\,\,\,\,\,\,\,\,\,\,\,\,\,[\alpha]\longmapsto\Big((b^q)^{-1}\circ H^{q+1}(\alpha)\circ b^q,[\alpha]\Big)$$
	is an isomorphism. 
\end{proof}

\begin{definition}[\cite{BS}, Definition 2.1]\label{d5}
	Given a simply connected free differential graded Lie   $(\L(W_q\oplus W_{\leq k}),\delta)$ where  $q>k$ and   let   $b_{q}:W_q\to H_{q-1}(\L (W_{\leq k}))$ be the linear map  defined as follows
	\begin{equation}
	b_{q}(v)=[\delta(v)]\,\,\,\,\,\,,\,\,\,\,\, v\in W_{q}\label{037}
	\end{equation}
	Here $[\delta(v)]$  denotes
	the homology class of $\delta(v)\in
	\L_{q-1} (W_{\leq k})$.
	
	\medskip

		We define  $\mathcal{R}^{q}_{k}$ to be subgroup   of $\aut(W_{q})\times \mathcal{E}(\L (W_{\leq
			k})$ consisting of the pairs $(\xi,[\alpha])$  making  the following
		diagram commutes
	\begin{equation}		\label{020}
		\begin{picture}(300,90)(0,30)
		\put(60,100){$W_{q}\hspace{1mm}\vector(1,0){150}\hspace{1mm}W_{q}$}
		\put(69,76){\scriptsize $b_{q}$} \put(238,76){\scriptsize $b_{q}$}
		\put(66,96){$\vector(0,-1){38}$} \put(235,96){$\vector(0,-1){38}$}
		\put(155,103){\scriptsize $\xi$} \put(145,52){\scriptsize
			$H_{q-1}(\alpha)$} \put(35,46){$H_{q-1}(\L (W_{\leq k}))
			\hspace{1mm}\vector(1,0){110}\hspace{1mm}H_{q-1}(\L (W_{\leq k}))
			\hspace{1mm}$}
		\end{picture}
			\end{equation}
\end{definition}
\begin{theorem}[\cite{BS}, Theorem 2.6]\label{t08}
	There exists 
a   short exact sequence of groups
	\begin{equation}\label{034}
	\mathrm{Hom}\big(W_q, H_{q}(\L (W_{\leq
		k}))\big) \rightarrowtail
	\mathcal{E}(\L(W_{q}\oplus W_{\leq k}))\overset{\lambda}{
		\twoheadrightarrow}\mathcal{R}^{q}_{k}
	\end{equation}
		where $\lambda([\alpha])=(\widetilde{\alpha}_{q},[\alpha_{k}])$.  Here $\widetilde{\alpha}_{q}:W_q\to W_q$ is the isomorphism induced by $\alpha$ on the indecomposables and $\alpha_{k}$ is the restriction of $\alpha$ to $\L (W_{\leq k})$
\end{theorem}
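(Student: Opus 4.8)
The plan is to transcribe into the Lie setting the proof of the dual statement Theorem~\ref{t8}: I would exhibit $\lambda$ as a group homomorphism, verify that its image lands in $\mathcal{R}^{q}_{k}$, establish surjectivity by an explicit extension, and then identify the kernel. Everything rests on the structural consequences of the hypothesis $q>k$. Since $\delta$ lowers degree by one and, in the simply connected range, every bracket involving a generator of $W_q$ has degree strictly larger than $q$, one checks that $\delta(W_{\leq k})\subseteq\L(W_{\leq k})$, so that $\L(W_{\leq k})$ is a sub-DGL, and that $\delta(W_q)\subseteq\L_{q-1}(W_{\leq k})$, which is exactly what makes $b_q$ of Definition~\ref{d5} well defined. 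For the same degree reason any automorphism $\alpha$ is filtered: it restricts to an automorphism $\alpha_k$ of $\L(W_{\leq k})$, and writing $\alpha(v)=\widetilde{\alpha}_q(v)+\zeta(v)$ for $v\in W_q$, the indecomposable part $\widetilde{\alpha}_q$ lies in $\aut(W_q)$ while $\zeta(v)\in\L^{\geq 2}(W_{\leq k})_q$. Because homotopic DGL maps agree on indecomposables and have homotopic restrictions, $\lambda([\alpha])=(\widetilde{\alpha}_q,[\alpha_k])$ descends to a well-defined group homomorphism into $\aut(W_q)\times\E(\L(W_{\leq k}))$.

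To see that $\lambda$ lands in $\mathcal{R}^{q}_{k}$, I would evaluate the chain-map identity $\alpha\delta=\delta\alpha$ on $v\in W_q$, obtaining $\alpha_k(\delta v)=\delta\widetilde{\alpha}_q(v)+\delta\zeta(v)$; passing to homology in degree $q-1$ and discarding the boundary $\delta\zeta(v)$ gives $H_{q-1}(\alpha_k)\circ b_q=b_q\circ\widetilde{\alpha}_q$, which is the commutativity of diagram (\ref{020}). For surjectivity, given $(\xi,[\alpha'])\in\mathcal{R}^{q}_{k}$ I would keep $\alpha'$ on $\L(W_{\leq k})$ and extend over $W_q$ by $\alpha(v)=\xi(v)+\eta(v)$, where $\eta(v)$ solves $\delta\eta(v)=\alpha'(\delta v)-\delta\xi(v)$. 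Such an $\eta(v)$ exists precisely because the commuting square forces $[\alpha'(\delta v)]=H_{q-1}(\alpha')(b_q v)=b_q(\xi v)=[\delta\xi(v)]$, so the right-hand side is a cycle bounding; the resulting $\alpha$ is a chain map which is an isomorphism on generators, hence an automorphism, and $\lambda([\alpha])=(\xi,[\alpha'])$.

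Finally I would compute the kernel. A class with $\lambda([\alpha])=(\mathrm{id},[\mathrm{id}])$ can be normalised to a representative $\alpha=\mathrm{id}+\eta$ with $\alpha_k=\mathrm{id}$ and $\widetilde{\alpha}_q=\mathrm{id}$; the chain-map condition then forces $\eta$ to be a linear map $W_q\to Z_q(\L(W_{\leq k}))$ into cycles, and composing two such automorphisms adds the corresponding $\eta$'s, so the kernel is an abelian group fed by the degree-$q$ cycles. The step demanding the most care—and the main obstacle—is to show that the homotopy relation on these unipotent automorphisms collapses exactly to congruence of $\eta$ modulo boundaries (this includes justifying the normalisation $\alpha_k=\mathrm{id}$). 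A derivation-homotopy analysis in the free DGL yields $\ker\lambda\cong\Hom(W_q,H_q(\L(W_{\leq k})))$, and assembling the three steps produces the short exact sequence (\ref{034}).
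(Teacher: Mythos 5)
The paper does not prove this statement at all: it is imported verbatim as Theorem~2.6 of \cite{BS}, so there is no internal proof to compare against. Your outline reproduces the standard argument of that reference (and of its Sullivan-side dual, Theorem~\ref{t8} from \cite{Benk3}): the degree bookkeeping forced by $q>k$, the chain-map identity giving the commuting square for $b_q$, the obstruction-free extension argument for surjectivity, and the unipotent automorphisms $\mathrm{id}+\eta$ for the kernel are all correct and are exactly how the cited proof goes. You rightly flag the one genuinely technical point --- normalising $\alpha_k=\mathrm{id}$ via homotopy extension along the cofibration $\L(W_{\leq k})\hookrightarrow\L(W_q\oplus W_{\leq k})$ and showing the homotopy relation on such $\alpha$ collapses to congruence of $\eta$ modulo boundaries --- which is asserted rather than carried out, but the claim is true and is the content of the derivation-homotopy computation in \cite{BS}.
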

\begin{corollary}\label{c0}
	Assume that the linear map  $b_{q}$ is an isomorphism, then 
	$$\mathcal{R}^{q}_{k}\cong  \mathcal{E}(\L (W_{\leq
		k}))$$
\end{corollary}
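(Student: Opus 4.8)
The plan is to follow verbatim the argument used for Corollary \ref{c1}, transporting it from the Sullivan (cochain) side to the Quillen (Lie) side by lowering degrees and reversing the relevant arrows. The essential observation is that $\mathcal{R}^{q}_{k}$ is by Definition \ref{d5} a subgroup of $\aut(W_{q})\times \mathcal{E}(\L(W_{\leq k}))$, so the projection onto the second factor $p\colon \mathcal{R}^{q}_{k}\to \mathcal{E}(\L(W_{\leq k}))$, $(\xi,[\alpha])\mapsto[\alpha]$, is automatically a group homomorphism. The whole statement therefore reduces to showing that $p$ becomes a bijection once $b_{q}$ is assumed to be an isomorphism.

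First I would read off from the commuting square (\ref{020}) the identity $b_{q}\circ\xi = H_{q-1}(\alpha)\circ b_{q}$. Because $b_{q}$ is an isomorphism, this solves uniquely for $\xi$, namely $\xi = b_{q}^{-1}\circ H_{q-1}(\alpha)\circ b_{q}$, which shows at once that a pair in $\mathcal{R}^{q}_{k}$ is completely determined by its component $[\alpha]$; hence $p$ is injective. For surjectivity I would exhibit the explicit candidate inverse
$$\mathcal{E}(\L(W_{\leq k}))\longrightarrow \mathcal{R}^{q}_{k},\qquad [\alpha]\longmapsto\big(b_{q}^{-1}\circ H_{q-1}(\alpha)\circ b_{q},\,[\alpha]\big),$$
and verify three routine points: that $b_{q}^{-1}\circ H_{q-1}(\alpha)\circ b_{q}$ is a genuine automorphism of $W_{q}$ (true since $\alpha$ is an equivalence, so $H_{q-1}(\alpha)$ is an isomorphism, and conjugation by the isomorphism $b_{q}$ preserves this), that the resulting pair satisfies the defining square (\ref{020}) and so lies in $\mathcal{R}^{q}_{k}$ (immediate from the formula), and that the assignment respects composition (which follows from functoriality of $H_{q-1}(-)$ together with the telescoping of the conjugating factors $b_{q}$ and $b_{q}^{-1}$). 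This produces a two-sided inverse of $p$, giving the asserted isomorphism.

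Since every step is formal, I do not anticipate a genuine obstacle. The only point deserving attention is bookkeeping: one must make sure the Lie bracket and differential conventions behind the definition $b_{q}(v)=[\delta(v)]$ do not introduce a sign or degree shift that would break the conjugation formula. As Definition \ref{d5} and Theorem \ref{t08} have already been arranged so that diagram (\ref{020}) is the exact homological mirror of diagram (\ref{13}), this dictionary is already in place, and the verification of injectivity, surjectivity, and the homomorphism property goes through exactly as on the Sullivan side.
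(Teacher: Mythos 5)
Your proposal is correct and follows essentially the same argument as the paper: use the commuting square (\ref{020}) together with the invertibility of $b_{q}$ to solve uniquely for $\xi=b_{q}^{-1}\circ H_{q-1}(\alpha)\circ b_{q}$, and then observe that $[\alpha]\mapsto\big(b_{q}^{-1}\circ H_{q-1}(\alpha)\circ b_{q},[\alpha]\big)$ is an isomorphism onto $\mathcal{R}^{q}_{k}$. Your additional verifications (well-definedness and compatibility with composition) are routine points the paper leaves implicit.
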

\begin{proof}
	As  $b_{q}$ is an isomorphism, then from the commutative diagram (\ref{020}) we deduce that $\xi=(b_q)^{-1}\circ H_{q-1}(\alpha)\circ b_q$. Therefore the map
	$$ \mathcal{E}(\L (W_{\leq
		k}))\to\mathcal{R}^{q}_{k} \,\,\,\,\,\,\,\,\,\,\,\,\,\,\,\,\,,\,\,\,\,\,\,\,\,\,\,\,\,\,\,\,\,[\alpha]\longmapsto\Big((b_q)^{-1}\circ H_{q-1}(\alpha)\circ b_q,[\alpha]\Big)$$
	is an isomorphism.  
\end{proof}
\subsection{Whitehead exact sequences in rational homotopy theory}
To  every  free differential graded Lie algebra 	$(\L(W),\delta)$ such that $W_{1}=0$, we can assign (see  \cite{Benk10,Benk0,BS,Benk14}) the following long  exact   sequence
\begin{equation}\label{23}
\cdots \rightarrow W_{n+1}\overset{b_{n+1}}{\longrightarrow }%
H_{n}(\L (W_{\leq n-1}))\rightarrow H_{n}(\L(W))\rightarrow
W_{n}
\overset{b_{n+1}}{\longrightarrow }\cdots
\end{equation}
called the Whitehead exact sequence of $(\L(W),\delta)$, where  $b_{*}$ is the graded  linear map defined in (\ref{037}).  Hence if  $X$ is a 2-connected  rational space of finite type and if $(\L (W),\partial)$ is its Quillen's  model, then   the properties of  this model imply
$$\pi_{n}(X)\cong H_{n-1}(\L (W))\,\,\,\,\,,\,\,\,H_{n}(X,\Bbb Q)\cong W_{n-1}
\,\,,\,\,\,\pi_{n}(X^{n-1})\cong H_{n-1}(\L (W_{\leq n-2}))$$
Here $X^{n}$ for the $n^{\text{th}}$ skeleton   of $X$.  
Therefore the Whitehead exact sequence of this model can be written as 
	\begin{equation}\label{25}
\cdots \rightarrow
H_{n+1}(X)\overset{}{\rightarrow }%
\pi_{n}(X^{n-1})\overset{}{\rightarrow} \pi
_{n}(X)\overset{}{\rightarrow }%
H_{n}(X)\overset{}{\rightarrow }\cdots
\end{equation}

Likewise, let $(\Lambda V,\partial)$ be a  simply connected free commutative cochain algebra. In \cite{Benk1,Benk2,Benk4,Benk15,Benk16},  it is shown that with $(\Lambda V,\partial)$  we can associate the following  long exact sequence
\begin{equation}\label{17}
\cdots\to V^{k}\overset{b^{k}}{\longrightarrow} H^{k+1}(\Lambda V^{\leq k-1})\to H^{k+1}(\Lambda V)\to V^{k+1}\overset{b^{k+1}}{\longrightarrow}\cdots
\end{equation}
called the Whitehead exact sequence of $(\Lambda V,\partial)$, where  $b^{*}$ is the graded  linear map defined in (\ref{37}). Thus,  if  $X$ is  a simply connected rational
space of finite type and $(\Lambda(V),\partial)$ is its   Sullivan minimal
model, then by virtue of  the properties of  this model we obtain the following identifications
$$H^{k}(X,\Bbb Q)\cong H^{n}(\Lambda V
)\,\,\,,\,\,\,H^{k+1}(X^{[k]},\Bbb Q)\cong H^{k+1}(\Lambda V^{\leq k})\,\,\,,\,\,\,V^{k}\cong\mathrm{Hom}(\pi_{k}( X),\Bbb Q)$$
Here $X^{[k]}$ for the $k^{\text{th}}$ Postikov section  of $X$.  Therefore the Whitehead exact sequence of this model can be written as
	\begin{equation}\label{21}
\cdots \rightarrow \mathrm{Hom}(\pi_{k}( X),\Bbb Q)\overset{}{\rightarrow }%
H^{k+1}(X^{[k]})\rightarrow H^{k+1}(X)\rightarrow
\mathrm{Hom}(\pi_{k+1}( X),\Bbb Q)%
\overset{}{\rightarrow }\cdots
\end{equation}
\begin{theorem}
	\label{00}
	If $X$  is  a $\rm{2}$-connected 
	rational space  of finite type, then
	\begin{equation}\label{24}
	H^{k+1}(X^{[k]},\Bbb Q)\cong \mathrm{Hom}(\pi_{k}(X^{k-1}),\Bbb Q)\,\,\,\,\,\,,\,\,\,\,\,\,k\geq 2
	\end{equation}
\end{theorem}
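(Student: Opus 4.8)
The plan is to realize the two sides of (\ref{24}) as the ``middle term'' of two exact sequences that otherwise share all of their groups, and then to match those middle terms by exactness. Since we are over $\Bbb Q$, every group in sight is a finite-dimensional vector space and $\mathrm{Hom}(-,\Bbb Q)$ is exact, so the whole matter reduces to a dimension count once the outer data are seen to agree. Concretely, I would first isolate from the Sullivan Whitehead sequence (\ref{21}) the five-term segment
$$H^{k}(X)\longrightarrow \mathrm{Hom}(\pi_{k}(X),\Bbb Q)\longrightarrow H^{k+1}(X^{[k]})\longrightarrow H^{k+1}(X)\longrightarrow \mathrm{Hom}(\pi_{k+1}(X),\Bbb Q),$$
whose middle term is exactly the left-hand side of (\ref{24}).

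Dually, I would extract from the Quillen Whitehead sequence (\ref{25}) the segment $\pi_{k+1}(X)\to H_{k+1}(X)\to \pi_{k}(X^{k-1})\to \pi_{k}(X)\to H_{k}(X)$ and apply $\mathrm{Hom}(-,\Bbb Q)$. Using the natural identification $\mathrm{Hom}(H_{j}(X),\Bbb Q)\cong H^{j}(X)$, this produces the exact segment
$$H^{k}(X)\longrightarrow \mathrm{Hom}(\pi_{k}(X),\Bbb Q)\longrightarrow \mathrm{Hom}(\pi_{k}(X^{k-1}),\Bbb Q)\longrightarrow H^{k+1}(X)\longrightarrow \mathrm{Hom}(\pi_{k+1}(X),\Bbb Q),$$
whose middle term is the right-hand side of (\ref{24}). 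The two displayed segments have identical first, second, fourth and fifth terms, so it remains to see that the boundary homomorphisms $\beta_{k}\colon H^{k}(X)\to \mathrm{Hom}(\pi_{k}(X),\Bbb Q)$ and $\beta_{k+1}\colon H^{k+1}(X)\to \mathrm{Hom}(\pi_{k+1}(X),\Bbb Q)$ agree in the two sequences.

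This coincidence is the step I expect to be the main obstacle, since the two sequences are built from two different models. The map $\beta_{j}$ coming from (\ref{21}) is induced on the Sullivan model by restricting a cocycle to its indecomposable part $V^{j}\cong\mathrm{Hom}(\pi_{j}(X),\Bbb Q)$, while the map obtained from (\ref{25}) is the $\Bbb Q$-dual of the rational Hurewicz homomorphism $\pi_{j}(X)\to H_{j}(X)$, i.e. of the map $H_{j-1}(\L(W))\to W_{j-1}$ reading off the generating part of a Lie cycle. I would prove they agree by checking that, under the standard dictionary $V^{j}\cong \mathrm{Hom}(\pi_{j}(X),\Bbb Q)$ and $W_{j-1}\cong H_{j}(X)$, each is the $\Bbb Q$-dual of one and the same Hurewicz map of $X$: both encode the pairing of a rational cohomology class with a rational homotopy class, so (\ref{21}) and (\ref{25}) are two algebraic incarnations of the single topological Whitehead exact sequence of $X$. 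Granting this, each middle term sits in a short exact sequence $0\to \mathrm{coker}(\beta_{k})\to (\,\cdot\,)\to \ker(\beta_{k+1})\to 0$ with the \emph{same} outer vector spaces, whence the two middle terms have equal dimension and are therefore isomorphic, which is (\ref{24}); equivalently, one assembles the obvious ladder between the two sequences with identities on the four shared terms and applies the five lemma.

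Finally, I would record at the outset that the hypothesis that $X$ be $2$-connected is exactly what legitimises the set-up: it forces $W_{1}=0$, so that the Quillen sequence (\ref{23})/(\ref{25}) is defined, and $V^{\leq 2}=0$, and it ensures the low-degree bookkeeping is harmless, e.g. for $k=2$ both sides of (\ref{24}) vanish since $X^{1}$ and $X^{[1]}$ are contractible.
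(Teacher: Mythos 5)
Your proposal follows essentially the same route as the paper: dualize the Quillen Whitehead sequence (\ref{25}) over $\Bbb Q$, compare it term by term with the Sullivan Whitehead sequence (\ref{21}), and identify the two middle terms. You are in fact more explicit than the paper about the key point that the connecting maps $H^{j}(X)\to\mathrm{Hom}(\pi_{j}(X),\Bbb Q)$ in the two sequences coincide (the paper simply asserts this), and your dimension count via $0\to\coker(\beta_{k})\to(\,\cdot\,)\to\ker(\beta_{k+1})\to 0$ is the right way to finish, since no canonical map between the middle terms is supplied for a literal five-lemma ladder.
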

\begin{proof} 
	Applying  the exact functor $\mathrm{Hom}(., \Bbb Q)$ to the	exact sequence (\ref{25})  we obtain
	\begin{equation}\label{22}
	\cdots \leftarrow
	H^{n+1}(X,\Bbb Q)\overset{}{\leftarrow }%
	\mathrm{Hom}(\pi_{n}(X^{n-1}), \Bbb Q)	\overset{}{\leftarrow}\mathrm{Hom}(\pi
	_{n}(X), \Bbb Q) \overset{}{\leftarrow }%
	H^{n}(X,\Bbb Q)	\overset{}{\leftarrow }\cdots
	\end{equation}
	Taking into  account that
	\begin{itemize}
		\item All  groups involved   are vector spaces of finite dimensions 
		\item The two maps $H^{n}(X,\Bbb Q)\to \mathrm{Hom}(\pi
		_{n}(X), \Bbb Q) $ appearing  in   (\ref{21}) and (\ref{22}) are the same morphism
		\item $	\mathrm{Hom}(H_{*}(X,\Bbb Q), \Bbb Q)=H^{*}(X,\Bbb Q)$
	\end{itemize}
	and  by comparing  the  sequences (\ref{21}), (\ref{22}) we get  (\ref{24}).
\end{proof}
\subsection{Elliptic  algebras}
Recall that (see \cite{Au, FHT}) 
	a simply connected free differential graded commutative   algebra $(\Lambda V,\partial)$  is called elliptic if both 
	$H^*(\Lambda V)$ and $V^*$ are finite dimensional. Let us call $n = \mathrm{max}\{i:\, H^i(\Lambda V)\neq 0\}$ the formal dimension of $(\Lambda V,\partial)$. The following theorem mentions some important properties of   elliptic algebras.
\begin{theorem}\label{t1}(\cite{Au},  $\rm{Theorem\, 7.4.2}$).
	Suppose $(\Lambda V,\partial)$  is  simply connected and  elliptic of formal dimension $n$. Then
	\begin{enumerate}
		\item $\dim\, V^{odd}\geq \dim\, V^{even}$.
		\item If $\{x_j\}$ is a basis of $ V^{odd}$ and $\{y_j\}$ is a basis of $ V^{even}$, then
		$$n=\sum_{}\vert x_{j}\vert- \sum_{}(\vert y_{j}\vert-1).$$
		\item $ \sum_{}\vert y_{j}\vert\leq n$ and  $ \sum_{}\vert x_{j}\vert\leq 2n-1.$ 
		\item  $V^{i}=0,$ for $i\geq 2n.$
	\end{enumerate}
\end{theorem}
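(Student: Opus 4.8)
The plan is to replace $(\Lambda V,\partial)$ by its associated \emph{pure} algebra and then extract all four assertions from commutative algebra together with Poincar\'e duality. First I would filter $\Lambda V$ by the wordlength in $V^{odd}$; the associated graded differential $d_\sigma$ vanishes on $V^{even}$ and carries each odd generator $x_j$ to the $\Lambda V^{even}$-component of $\partial x_j$. The resulting spectral sequence has $E_1\cong H^*(\Lambda V,d_\sigma)$ and converges to $H^*(\Lambda V,\partial)$, and since everything is finite in each degree one checks that $(\Lambda V,\partial)$ is elliptic if and only if the pure algebra $(\Lambda V,d_\sigma)$ is, with the same top nonzero degree $n$. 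Thus it suffices to prove (1)--(4) for the pure model, where $\Lambda V^{even}=\Bbb Q[y_1,\dots,y_q]$ is polynomial, $\Lambda V^{odd}=\Lambda(x_1,\dots,x_p)$ is exterior, and each $P_j:=d_\sigma x_j$ is a polynomial in the $y_i$ of degree $|x_j|+1$.

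For (1), I would note that the wordlength-zero part of $H^*(\Lambda V,d_\sigma)$ is exactly $\Bbb Q[y_1,\dots,y_q]/(P_1,\dots,P_p)$: every element of $\Lambda V^{even}$ is a $d_\sigma$-cocycle, while the coboundaries lying in $\Lambda V^{even}$ form precisely the ideal $(P_1,\dots,P_p)$. Ellipticity forces this quotient to be finite dimensional, so $(P_1,\dots,P_p)$ is $\mathfrak m$-primary; by Krull's height theorem an $\mathfrak m$-primary ideal in a polynomial ring on $q$ generators cannot be generated by fewer than $q$ elements, whence $p=\dim V^{odd}\ge q=\dim V^{even}$.

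For (2), I would use that finite codimension of $(P_1,\dots,P_p)$ lets me pick $q$ linear combinations forming a regular sequence, and reorganise the odd basis into a regular part $x_1,\dots,x_q$ and the rest $x_{q+1},\dots,x_p$. The complete intersection $B=\Bbb Q[y]/(P_1,\dots,P_q)$ is then a graded Gorenstein (hence Poincar\'e duality) algebra whose socle sits in degree $\sum_{j\le q}(|x_j|+1)-\sum_j|y_j|$, and in the pure model each surviving generator $x_{j}$ with $j>q$ raises the top degree by $|x_j|$. Adding these contributions yields
$$n=\sum_{j=1}^{p}|x_j|-\sum_{j=1}^{q}\big(|y_j|-1\big),$$
which is (2).

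Finally, (3) and (4) reduce to the single inequality $\sum_j|y_j|\le n$. Granting it, substituting (2) gives $\sum_j|x_j|=n+\sum_j(|y_j|-1)\le 2n-1$ once $q\ge1$ (the case $q=0$ being trivial), which is the second inequality of (3); and since every generator then has degree at most $\max\big(\sum|y_j|,\sum|x_j|\big)<2n$, assertion (4) follows at once. The inequality $\sum_j|y_j|\le n$ is equivalent to the statement that the socle degree of $B$ is at least $\sum_j|y_j|$, i.e. that $\sum_{j\le q}\deg P_j\ge 2\sum_j|y_j|$ for a regular sequence drawn from the $P_j$. I expect this last point to be the main obstacle: it is exactly where minimality ($P_j\in\mathfrak m^2$) and the Gorenstein/Poincar\'e-duality structure of the pure model must be combined rather than estimated by a naive degree count, since the product $y_1\cdots y_q$ itself may well vanish in $B$. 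A second, more technical obstacle is the justification in the first step that passage to the pure model preserves not only ellipticity but also the precise value $n$ of the formal dimension.
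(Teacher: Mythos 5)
The paper does not actually prove this statement: it is quoted verbatim from the literature (\cite{Au}, Theorem 7.4.2, i.e.\ the Friedlander--Halperin theorem on elliptic Sullivan algebras), so there is no in-paper argument to compare yours against. Your outline does follow the standard route --- pass to the associated pure algebra $(\Lambda V,d_\sigma)$, identify the odd-wordlength-zero cohomology with $\Bbb Q[y_1,\dots,y_q]/(P_1,\dots,P_p)$, obtain (1) from Krull's height theorem, and derive (3) and (4) from (2) together with the inequality $\sum_j|y_j|\le n$ --- and those reductions are correct as far as they go.

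However, the proposal has genuine gaps at exactly the points that constitute the real content of the theorem, and you flag some of them yourself without resolving them. First, the inequality $\sum_j|y_j|\le n$ is never proved; you correctly observe that a naive degree count fails (the class $y_1\cdots y_q$ may die in $B$), but all of (3) and (4) hang on this estimate, and its proof (the Friedlander--Halperin analysis of the pure model) is the hard core of the theorem. Second, the claim that passage to $(\Lambda V,d_\sigma)$ preserves both ellipticity and the exact value of the formal dimension is itself a nontrivial theorem (see \cite{FHT}, Proposition 32.4) and is only asserted. Third, the step in (2) where you ``pick $q$ linear combinations forming a regular sequence'' does not work as stated: the $P_j$ are homogeneous of different degrees, so their linear combinations are not homogeneous, and one cannot in general reorder the $P_j$ so that the first $q$ of them form a regular sequence; the usual remedy (replacing the $P_j$ by powers and taking generic combinations) destroys the degree bookkeeping that your socle computation relies on, and in any case $H^*(\Lambda V,d_\sigma)$ is not simply $B\otimes\Lambda(x_{q+1},\dots,x_p)$. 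So while the skeleton of your argument is the correct classical one, the three load-bearing steps are missing.
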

From the property (3) we can derive
\begin{corollary}\label{c3}
	Suppose $(\Lambda V,\partial)$  is simply connected and elliptic of formal dimension $n$. Then 
 $V^{i}=0,$ for $i>n$  and $i$ even.
\end{corollary}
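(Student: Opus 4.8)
The plan is to deduce the statement directly from part (3) of Theorem~\ref{t1}, arguing by contradiction. Suppose, toward a contradiction, that $V^{i}\neq 0$ for some even $i$ with $i>n$, and pick a nonzero even-degree generator, say a basis element $y$ of $V^{even}$ with $\vert y\vert =i$.

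First I would record the one structural fact needed beyond the quoted inequality: since $(\Lambda V,\partial)$ is simply connected, one has $V^{0}=V^{1}=0$, so every basis element of $V^{even}$ has degree at least $2$; in particular every $\vert y_{j}\vert >0$. Next, complete $y$ to a basis $\{y_{j}\}$ of $V^{even}$. Then $\vert y\vert$ occurs among the summands of $\sum_{j}\vert y_{j}\vert$, and because all remaining summands are strictly positive, $\sum_{j}\vert y_{j}\vert \geq \vert y\vert = i > n$. This contradicts the bound $\sum_{j}\vert y_{j}\vert \leq n$ supplied by property (3) of Theorem~\ref{t1}. Hence no even-degree generator of degree exceeding $n$ can exist, which is exactly the assertion $V^{i}=0$ for $i>n$ and $i$ even.

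I do not expect any genuine obstacle here: the entire content is carried by the inequality $\sum_{j}\vert y_{j}\vert \leq n$, and the argument is a one-line consequence of it. The only point that deserves explicit mention is the appeal to simple connectivity to guarantee that all the degrees $\vert y_{j}\vert$ are positive; this positivity is what forces a single oversized even generator to push the whole sum above $n$, and it is the reason the same conclusion cannot be expected for the odd generators, where property (3) only gives the weaker bound $\sum_{j}\vert x_{j}\vert \leq 2n-1$.
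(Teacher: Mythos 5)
Your argument is correct and is exactly the derivation the paper intends: the paper gives no explicit proof, stating only that the corollary follows ``from the property (3)'' of Theorem~\ref{t1}, and your contradiction via $\sum_j \vert y_j\vert \geq \vert y\vert = i > n$ (with positivity of the degrees coming from simple connectivity) is the obvious way to fill that in. No issues.
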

\begin{lemma}\label{l02}
		Suppose $(\Lambda V,\partial)$  is  simply connected and  elliptic of formal dimension $n$. For every $k$ such that $2k>n$ we have $$H^{2k+1}(\Lambda V^{\leq
		2k-1})=0$$
\end{lemma}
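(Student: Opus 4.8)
The plan is to read off $H^{2k+1}(\Lambda V^{\leq 2k-1})$ from a single well-chosen segment of the Whitehead exact sequence (\ref{17}) and then annihilate it using the two structural facts about elliptic algebras recorded above. Specialising the running index in (\ref{17}) to the value $2k$, the pertinent three-term portion is
$$V^{2k}\overset{b^{2k}}{\longrightarrow} H^{2k+1}(\Lambda V^{\leq 2k-1})\longrightarrow H^{2k+1}(\Lambda V),$$
so the group we wish to kill occupies the middle position, receiving $b^{2k}$ on the left and mapping into $H^{2k+1}(\Lambda V)$ on the right.

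First I would clear the right-hand term. Since $(\Lambda V,\partial)$ has formal dimension $n$, by definition $H^{i}(\Lambda V)=0$ whenever $i>n$; the hypothesis $2k>n$ forces $2k+1>n$, hence $H^{2k+1}(\Lambda V)=0$. Exactness of (\ref{17}) at the middle term then makes $b^{2k}$ surjective onto $H^{2k+1}(\Lambda V^{\leq 2k-1})$.

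It remains to kill the source of $b^{2k}$. Here the parity is decisive: the chosen top degree $2k+1$ is odd, so the source degree $2k$ is even and exceeds $n$, and \corref{c3} gives $V^{2k}=0$. A surjection out of the zero space has zero image, so $H^{2k+1}(\Lambda V^{\leq 2k-1})=\im\,b^{2k}=0$, as desired.

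The computation itself is immediate; the step I would watch most carefully is the re-indexing, namely verifying that under the substitution $k\mapsto 2k$ the term $H^{k+1}(\Lambda V^{\leq k-1})$ of (\ref{17}) becomes exactly $H^{2k+1}(\Lambda V^{\leq 2k-1})$ and that its right neighbour is the full cohomology $H^{2k+1}(\Lambda V)$ rather than a term involving $V^{*}$. The conceptual content is simply that choosing an odd total degree places the source of the connecting map $b$ in an even degree above $n$, which is precisely where ellipticity (via \corref{c3}) forbids indecomposables.
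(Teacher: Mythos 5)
Your proof is correct and is essentially the paper's own argument: the paper uses the long exact cohomology sequence of the inclusion $(\Lambda V^{\leq 2k-1},\partial)\subseteq(\Lambda V,\partial)$, identifies $H^{2k}(\Lambda V/\Lambda V^{\leq 2k-1})\cong V^{2k}$, and kills the same two outer terms via \corref{c3} and the formal dimension — which is exactly the three-term segment of (\ref{17}) you use. The only (immaterial) difference is that the paper reads the vanishing of $V^{2k}$ as injectivity of $H^{2k+1}(\Lambda V^{\leq 2k-1})\to H^{2k+1}(\Lambda V)=0$, whereas you read the vanishing of $H^{2k+1}(\Lambda V)$ as surjectivity of $b^{2k}$ out of $V^{2k}=0$.
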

\begin{proof}
	From the long exact sequence of cohomology associated to the inclusion $(\Lambda V^{\leq
		2k-1} ,\partial)\subseteq (\Lambda V,\partial)$ we get
	$$\dots\to H^{2k}\Big(\Lambda V/\Lambda V^{\leq
		2k-1}\Big)\to H^{2k+1}(\Lambda V^{\leq
		2k-1})\to H^{2k+1}(\Lambda V)\to\cdots$$
	But $H^{2k}\Big(\Lambda V/\Lambda V^{\leq
		2k-1}\Big)\cong V^{2k}$ and by Corollary \ref{c3} we have $V^{2k}=0$ for $2k>n$. Therefore the map $H^{2k+1}(\Lambda V^{\leq
		2k-1})\to H^{2k+1}(\Lambda V)$ is injective. As $(\Lambda V,\partial)$ has  formal dimension $n$, it follows that $H^{2k+1}(\Lambda V)=0$ for $2k\geq n$,  implying that $H^{2k+1}(\Lambda V^{\leq
		2k-1})=0.$
\end{proof}
Using the exact sequence (\ref{17}), Corollary   \ref{c3} and Lemma \ref{l02}, we derive the following result
\begin{corollary}\label{c4}
	Suppose $(\Lambda V,\partial)$  is   simply connected and   elliptic of formal dimension $n$. Then the  linear map $b^{i}:V^{i}\overset{}{\longrightarrow} H^{i+1}(\Lambda V^{\leq i-1})$ is
\begin{itemize}
	\item 		An isomorphism for $i>n$  and $i$ odd.
		\item 	Nil for for $i>n$  and $i$ even.
\end{itemize}
\end{corollary}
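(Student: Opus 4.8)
The plan is to read off both assertions directly from the Whitehead exact sequence (\ref{17}) attached to $(\Lambda V,\partial)$, using only that the total cohomology $H^{*}(\Lambda V)$ is concentrated in degrees $\leq n$. First I would isolate the four-term segment of (\ref{17}) straddling the map $b^{i}$, namely
$$H^{i}(\Lambda V)\longrightarrow V^{i}\overset{b^{i}}{\longrightarrow} H^{i+1}(\Lambda V^{\leq i-1})\longrightarrow H^{i+1}(\Lambda V).$$
Since $(\Lambda V,\partial)$ has formal dimension $n$, both outer terms $H^{i}(\Lambda V)$ and $H^{i+1}(\Lambda V)$ vanish as soon as $i>n$ (because then $i>n$ and $i+1>n$). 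Exactness at $V^{i}$ then forces $b^{i}$ to be injective, its kernel being the image of the zero map on the left, while exactness at $H^{i+1}(\Lambda V^{\leq i-1})$ forces $b^{i}$ to be surjective, its image being the kernel of the zero map on the right. Hence $b^{i}$ is an isomorphism. This already settles the first bullet: for $i>n$ odd, $b^{i}$ is an isomorphism.

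For the second bullet I would feed in the two elliptic inputs. When $i>n$ is even, Corollary \ref{c3} gives $V^{i}=0$, so $b^{i}$ is the zero (nil) map for the trivial reason that its source vanishes. To confirm that the target is likewise trivial, so that the picture is consistent with the isomorphism statement above, I would appeal to Lemma \ref{l02}: writing $i=2k$, the hypothesis $i>n$ reads $2k>n$, and the lemma asserts precisely $H^{2k+1}(\Lambda V^{\leq 2k-1})=H^{i+1}(\Lambda V^{\leq i-1})=0$. Thus in the even range both source and target of $b^{i}$ are trivial, and $b^{i}$ is nil.

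I do not expect a real obstacle: once the correct segment of (\ref{17}) is extracted, the result is pure bookkeeping. The only point demanding care is the alignment of indices, namely recognising that in (\ref{17}) the term $H^{i}(\Lambda V)$ sits immediately to the left of $V^{i}$ and $H^{i+1}(\Lambda V)$ immediately to the right of $H^{i+1}(\Lambda V^{\leq i-1})$, so that the vanishing of cohomology above the formal dimension pinches $b^{i}$ between two zeros. Granting that, the isomorphism for odd $i$ and the nullity for even $i$ both fall out at once, with Corollary \ref{c3} and Lemma \ref{l02} supplying the complementary vanishing of the source and of the target in the even case.
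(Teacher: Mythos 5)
Your proposal is correct and matches the paper's derivation, which is stated only as ``Using the exact sequence (\ref{17}), Corollary \ref{c3} and Lemma \ref{l02}'': the vanishing of $H^{i}(\Lambda V)$ and $H^{i+1}(\Lambda V)$ for $i>n$ pinches $b^{i}$ into an isomorphism in the Whitehead sequence, and Corollary \ref{c3} together with Lemma \ref{l02} kills both source and target in the even case. Your fleshed-out exactness argument is exactly the intended bookkeeping, including the correct index alignment of the segment around $b^{i}$.
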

\begin{theorem}\label{t2}
	Suppose $(\Lambda V,\partial)$  is  simply connected and   elliptic of formal dimension $n$.  Then
	$$\E(\Lambda V)\cong \E(\Lambda V^{\leq n})$$
\end{theorem}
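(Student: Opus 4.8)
The plan is to build $(\Lambda V,\partial)$ up from $(\Lambda V^{\leq n},\partial)$ one generating degree at a time and to show that attaching each new degree leaves $\E$ unchanged. First I would record, via \thmref{t1}(4), that $V^{i}=0$ for $i\geq 2n$, so that $\Lambda V=\Lambda V^{\leq 2n-1}$. It therefore suffices to establish the isomorphism
$$\E(\Lambda V^{\leq q})\cong \E(\Lambda V^{\leq q-1})\qquad\text{for every }q\text{ with }n<q\leq 2n-1,$$
and then to compose these down to $\E(\Lambda V^{\leq n})$.

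For the inductive step I would split on the parity of $q$. When $q$ is even and $q>n$, \corref{c3} gives $V^{q}=0$, whence $\Lambda V^{\leq q}=\Lambda V^{\leq q-1}$ and there is nothing to prove. When $q$ is odd and $q>n$, I would apply \thmref{t8} with the base algebra $\Lambda V^{\leq q-1}$ (i.e. with the rôle of $n$ there played by $q-1$, noting that $\Lambda V^{\leq q-1}$ is a sub-cochain-algebra since minimality gives $\partial(\Lambda V^{\leq q-1})\subseteq \Lambda V^{\leq q-1}$), obtaining the short exact sequence
$$\Hom\big(V^{q},H^{q}(\Lambda V^{\leq q-1})\big)\rightarrowtail \E(\Lambda V^{\leq q})\overset{\Psi}{\twoheadrightarrow}\D^{q}_{q-1}.$$
By \corref{c4} the map $b^{q}$ is an isomorphism for $q$ odd and $q>n$, so \corref{c1} identifies the quotient term as $\D^{q}_{q-1}\cong \E(\Lambda V^{\leq q-1})$. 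Thus the step reduces to showing that the kernel term vanishes.

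The one genuine computation---and the point I expect to be the crux---is to prove that $H^{q}(\Lambda V^{\leq q-1})=0$ for $q>n$, which forces $\Hom(V^{q},H^{q}(\Lambda V^{\leq q-1}))=0$ and collapses the sequence to the desired isomorphism $\E(\Lambda V^{\leq q})\cong \E(\Lambda V^{\leq q-1})$. I would argue exactly as in \lemref{l02}: the inclusion $(\Lambda V^{\leq q-1},\partial)\subseteq(\Lambda V,\partial)$ yields a short exact sequence of cochain complexes, and the associated long exact sequence gives
$$H^{q-1}\big(\Lambda V/\Lambda V^{\leq q-1}\big)\longrightarrow H^{q}(\Lambda V^{\leq q-1})\longrightarrow H^{q}(\Lambda V).$$
Every monomial of $\Lambda V$ not lying in $\Lambda V^{\leq q-1}$ involves a generator of degree $\geq q$, so the quotient complex vanishes below degree $q$ and its $(q-1)$-st cohomology is $0$; and $H^{q}(\Lambda V)=0$ because $q>n$ and $n$ is the formal dimension. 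Hence the middle term is trapped between two zeros and vanishes.

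Finally I would assemble the chain
$$\E(\Lambda V)=\E(\Lambda V^{\leq 2n-1})\cong\E(\Lambda V^{\leq 2n-2})\cong\cdots\cong\E(\Lambda V^{\leq n}),$$
each isomorphism being an instance of the inductive step above. The only points demanding care are the correct relabelling of \thmref{t8} and \corref{c1} with $q-1$ in place of $n$, and the verification that $\Lambda V^{\leq q-1}$ is a subcomplex so that the long exact sequence is available; beyond the cohomology-vanishing lemma, the argument is then purely formal.
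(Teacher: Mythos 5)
Your proof is correct and follows essentially the same route as the paper: truncate at degree $2n-1$ using \thmref{t1}(4), descend one (odd) degree at a time via \thmref{t8}, identify the quotient term with $\E(\Lambda V^{\leq q-1})$ through \corref{c4} and \corref{c1}, and kill the kernel by the same long-exact-sequence vanishing argument as \lemref{l02}. If anything, your form of the vanishing statement, $H^{q}(\Lambda V^{\leq q-1})=0$ for all $q>n$, is exactly what the short exact sequence requires and cleanly covers the final step $\E(\Lambda V^{\leq n+1})\cong\E(\Lambda V^{\leq n})$ when $n$ is even, where the hypothesis $2k>n$ of \lemref{l02} as stated does not literally apply.
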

\begin{proof}
First   note that since $(\Lambda V,\partial)$  is   elliptic of formal dimension $n$, by  the property (3) of Theorem \ref{t1}, we  derive that $\E(\Lambda V)=\E(\Lambda V^{\leq 2n-1}).$ Next using 	Theorem \ref{t8} we obtain the following short exact sequence
\begin{equation}\label{01}
\mathrm{Hom}\big(V^{2n-1}, H^{2n-1}(\Lambda V^{\leq 2n-2})\big) \rightarrowtail
\mathcal{E}(\Lambda  V^{\leq 2n-1})\overset{\mathrm{}}{
\twoheadrightarrow}\D_{2n-2}^{2n-1}
\end{equation}
But  $V^{ 2n-2}=0$ and   $b^{2n-1}$ is an isomorphism, so using Corollary   \ref{c1} and Lemma  \ref{l02}, the sequence (\ref{01})  implies
$$\mathcal{E}(\Lambda( V^{\leq 2n-1}))
	\cong\mathcal{E}(\Lambda( V^{\leq 2n-3})$$
	Again using 	theorem \ref{t8} we obtain the following short exact sequence
	\begin{equation*}\label{02}
	\mathrm{Hom}\big(V^{2n-3}, H^{2n-3}(\Lambda V^{\leq 2n-4})\big) \rightarrowtail
	\mathcal{E}(\Lambda  V^{\leq 2n-3})\overset{\mathrm{}}{
		\twoheadrightarrow}\D_{2n-2}^{2n-3}
	\end{equation*}
but  $V^{ 2n-4}=0$ and   $b^{2n-3}$ is an isomorphism, so using Corollary   \ref{c1} and Lemma  \ref{l02}, the sequence (\ref{01})  implies
	$$\mathcal{E}(\Lambda( V^{\leq 2n-3}))
	\cong\mathcal{E}(\Lambda( V^{\leq 2n-5})$$
	Continuing this process by  using the same arguments,  we end up with the following formula
$$\mathcal{E}(\Lambda( V^{\leq 2n-3}))
\cong\mathcal{E}(\Lambda( V^{\leq 2n-5})\cong\dots\cong\mathcal{E}(\Lambda( V^{\leq n})\,\,\,,\,\,\,\, \text{ if $n$ is odd}$$
and 
$$\mathcal{E}(\Lambda( V^{\leq 2n-4}))
\cong\mathcal{E}(\Lambda( V^{\leq 2n-6})\cong\dots\cong\mathcal{E}(\Lambda( V^{\leq n+1})\,\,\,,\,\,\,\, \text{ if $n$ is even.}$$
In the case when $n$ is even,  according to  Corollary   \ref{c1} and Lemma  \ref{l02}, the sequence (\ref{01})  implies
$$\mathrm{Hom}\big(V^{n+1}, H^{n+1}(\Lambda V^{\leq n})\big) \rightarrowtail
\mathcal{E}(\Lambda V^{\leq n+1})\overset{\mathrm{}}{
	\twoheadrightarrow}\E(\Lambda V^{\leq n})$$
Finally, Lemma   \ref{l02}  assures   that  $H^{n+1}(\Lambda V^{\leq n})=0$. Hence $\mathcal{E}(\Lambda V^{\leq n+1})\cong\E(\Lambda V^{\leq n})$
\end{proof}
\begin{proposition}\label{p1}
	Let  $(\Lambda V,\partial)$  be a  simply connected free differential graded  algebra.  If the group  $\E(\Lambda V^{\leq n})$ is finite, then the linear map $b^{n}$ is injective. 
\end{proposition}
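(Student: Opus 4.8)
The plan is to prove the contrapositive: assuming the linear map $b^{n}$ fails to be injective, I will exhibit infinitely many distinct elements in $\E(\Lambda V^{\leq n})$, which contradicts finiteness. The natural tool is \thmref{t8} applied with top degree $n$ and truncation degree $n-1$, that is, substituting $n$ for the symbol $q$ and $n-1$ for the symbol $n$ in its statement. Since $\Lambda(V^{n}\oplus V^{\leq n-1})=\Lambda V^{\leq n}$, the short exact sequence (\ref{34}) then specializes to
\begin{equation*}
\mathrm{Hom}\big(V^{n}, H^{n}(\Lambda V^{\leq n-1})\big) \rightarrowtail \mathcal{E}(\Lambda V^{\leq n})\overset{\Psi}{\twoheadrightarrow}\D^{n}_{n-1},
\end{equation*}
so it suffices to force the quotient $\D^{n}_{n-1}$ to be infinite, because $\Psi$ is surjective.

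Next I would convert the failure of injectivity into a family of automorphisms. Choose a nonzero $v_{0}\in\ker b^{n}\subseteq V^{n}$ and fix a complement $U$ with $V^{n}=\Bbb Q v_{0}\oplus U$. For each scalar $\lambda\in\Bbb Q\setminus\{0\}$, define $\xi_{\lambda}\in\aut(V^{n})$ by $\xi_{\lambda}(v_{0})=\lambda v_{0}$ and $\xi_{\lambda}|_{U}=\mathrm{id}$; this is invertible since $\lambda\neq 0$. I claim each pair $(\xi_{\lambda},[\mathrm{id}])$ lies in $\D^{n}_{n-1}$, where $[\mathrm{id}]$ is the class of the identity of $\Lambda V^{\leq n-1}$. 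Indeed, commutativity of the defining square (\ref{13}) of \defref{d3} reduces to $b^{n}\circ\xi_{\lambda}=H^{n+1}(\mathrm{id})\circ b^{n}=b^{n}$; on $U$ this is automatic since $\xi_{\lambda}$ restricts to the identity there, while on $v_{0}$ one computes $b^{n}(\xi_{\lambda}(v_{0}))=\lambda\,b^{n}(v_{0})=0=b^{n}(v_{0})$. This last equality is exactly where the hypothesis $v_{0}\in\ker b^{n}$ is consumed.

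Finally, since $\xi_{\lambda}(v_{0})=\lambda v_{0}$, distinct values of $\lambda$ give distinct first components, so $\big\{(\xi_{\lambda},[\mathrm{id}])\big\}_{\lambda\in\Bbb Q\setminus\{0\}}$ is an infinite family inside $\D^{n}_{n-1}$; hence $\E(\Lambda V^{\leq n})$ surjects onto an infinite group and is itself infinite, giving the desired contradiction. The only genuinely delicate point is the verification that these homotheties satisfy the commuting square of \defref{d3}, which succeeds precisely because the chosen vector lies in the kernel of $b^{n}$; the rest is simply the observation that a nonzero rational vector space admits infinitely many distinct scalings, so finiteness of $\E(\Lambda V^{\leq n})$ cannot coexist with a nontrivial kernel of $b^{n}$.
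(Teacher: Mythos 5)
Your proposal is correct and follows essentially the same route as the paper's own proof: both pass to the short exact sequence of \thmref{t8} with $q=n$, use a nonzero kernel vector of $b^{n}$ to build the one-parameter family of scalings $(\xi_{\lambda},[\mathrm{id}])$ in $\D^{n}_{n-1}$, and conclude that $\E(\Lambda V^{\leq n})$ is infinite. The only cosmetic difference is that you phrase the construction via a chosen complement $U$ rather than a chosen basis, which changes nothing.
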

\begin{proof}
	First  Theorem \ref{t8} implies that 
	\begin{equation}\label{06}
	\mathrm{Hom}\big(V^{n}, H^{n}(\Lambda V^{\leq n-1})\big) \rightarrowtail
	\mathcal{E}(\Lambda V^{\leq n})\overset{}{
		\twoheadrightarrow}\D^{n}_{n-1}
	\end{equation}
	Assume that  $b^{n}$ is not  injective and let $v\neq 0\in V^{n}$ such that $b^{n}(v)=0$. Choose $\{v,v_{1},\dots,v_{k}\}$ as a basis of $V^{n}$ and define
	 $$\xi^{a}(v)=av\,\,\,\,\,\,\,\,,\,\,\,\,\,\,\,a\neq 0\in \Bbb Q\,\,\,\,\,\,\,\,\,\,\,\,\,\,\,\,\,\,\,\,,\,\,\,\,\,\,\,\,\,\,\,\,\,\,\,\,\,\,\xi^{a}(v_{i})=v_{i}$$
Clearly the pair $(\xi^{a},[id])\in\aut(V^{n})\times \mathcal{E}(\Lambda V^{\leq
	n-1})$ for every $a\neq 0\in \Bbb Q$ and   makes following
diagram commute

\begin{picture}(300,90)(0,30)
\put(60,100){$V^{n}\hspace{1mm}\vector(1,0){153}\hspace{1mm}V^{n}$}
\put(69,76){\scriptsize $b^{n}$} \put(238,76){\scriptsize $b^{n}$}
\put(66,96){$\vector(0,-1){38}$} \put(235,96){$\vector(0,-1){38}$}
\put(155,103){\scriptsize $\xi^a$} \put(145,50){\scriptsize
	$id$} \put(35,46){$H^{n+1}(\Lambda V^{\leq n-1})
	\hspace{1mm}\vector(1,0){110}\hspace{1mm}H^{n+1}(\Lambda V^{\leq n-1})
	\hspace{1mm}$}
\end{picture}

\noindent Therefore   $(\xi^{a},[id])\in\D^{n}_{n-1}$ for every $a\neq 0\in \Bbb Q$ implying that the group $\D^{n}_{n-1}$ is infinite. Consequently,  the group  $\E(\Lambda V^{\leq n})$ is also infinite according to the exact sequence  (\ref{06}) 
\end{proof}

\section{Topological applications}
\begin{definition}
	A  simply connected rational space $X$ is called  elliptic if its Sullivan model is elliptic. 
\end{definition}
In this case  the formal dimension of $X$ is defined as the formal dimension  of its Sullivan model. 
\begin{remark}
\label{r1}	
By virtue of Theorem \ref{t1} we conclude that if  $X$  is a simply connected rational  elliptic space of formal dimension $n$, then its  Quillen model can be written as $(\L (W_{\leq
	n-1}),\delta)$  and   its   Sullivan model as $(\Lambda V^{\leq 2n-1},\partial)$.
\end{remark}
A mere transcription of the Theorem \ref{t2} in the topological context, using the properties of the Sullivan model,  implies the following  theorem.
\begin{theorem}\label{t0}
	Let $X$ be  a rational  elliptic  space of formal dimension $n$. Then
	$$\E(X)= \E(X^{[2n-1]})\cong \E(X^{[2n-2]})\cong\dots\cong\E(X^{[n]})$$
	Here $X^{[k]}$ denotes  the $k^{\text{th}}$ Postnikov section  of $X$. 
\end{theorem}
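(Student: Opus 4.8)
The plan is to deduce Theorem \ref{t0} from its algebraic counterpart Theorem \ref{t2} through the Sullivan dictionary, so that the only genuinely topological input is the way the Postnikov tower of $X$ is mirrored by the filtration of its minimal model.

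First I would fix a Sullivan minimal model $(\Lambda V,\partial)$ of $X$. By definition of an elliptic space this model is itself elliptic of formal dimension $n$, so by Remark \ref{r1} it may be written as $(\Lambda V^{\leq 2n-1},\partial)$. The identification $\E(X)\cong\E(\Lambda V)$ recalled in Section~2, together with the fact noted in the introduction that $X$ coincides with $X^{[2n-1]}$, then yields the leftmost equality $\E(X)=\E(X^{[2n-1]})$.

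The central step is the dictionary entry relating Postnikov sections to truncations: for each $k$ the Sullivan minimal model of $X^{[k]}$ is $(\Lambda V^{\leq k},\partial)$, whence $\E(X^{[k]})\cong\E(\Lambda V^{\leq k})$. This rests on two observations. Since $V^{k}\cong\Hom(\pi_{k}(X),\Q)$ and $X^{[k]}$ has the same rational homotopy groups as $X$ in degrees $\leq k$ and vanishing homotopy above degree $k$, the minimal model of $X^{[k]}$ is generated precisely by $V^{\leq k}$. Moreover $(\Lambda V^{\leq k},\partial)$ is a genuine sub-differential-graded-algebra: minimality forces $\partial$ to be decomposable, and since it raises degree by one one obtains $\partial(V^{k})\subseteq(\Lambda^{\geq 2}V)^{k+1}\subseteq\Lambda V^{\leq k-1}$, so $\partial$ does restrict to $\Lambda V^{\leq k}$. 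This is standard in rational homotopy theory, and I would simply cite \cite{FHT}.

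Granting this correspondence, the chain of isomorphisms asserted in the theorem is merely the image, entry by entry, of the chain
$$\E(\Lambda V^{\leq 2n-1})\cong\E(\Lambda V^{\leq 2n-2})\cong\dots\cong\E(\Lambda V^{\leq n})$$
constructed inside the proof of Theorem \ref{t2}. Translating each $\E(\Lambda V^{\leq k})$ back to $\E(X^{[k]})$ produces exactly the displayed string. I anticipate no serious obstacle: the one point demanding care is the precise identification of the minimal model of $X^{[k]}$ with $(\Lambda V^{\leq k},\partial)$, after which Theorem \ref{t0} is a verbatim transcription of Theorem \ref{t2}.
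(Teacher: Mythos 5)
Your proposal is correct and follows exactly the route the paper takes: the paper proves Theorem \ref{t0} by declaring it ``a mere transcription'' of Theorem \ref{t2} via the Sullivan dictionary, and your argument simply spells out that transcription (in particular the standard identification of the minimal model of $X^{[k]}$ with $(\Lambda V^{\leq k},\partial)$) in more detail than the paper does.
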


Combining the model of Quillen and the model of Sullivan we obtain
\begin{theorem}\label{t09}
Let  $X$ be  a simply connected rational  elliptic  space of formal dimension $n$ such  that $n>m=max\big\{k \,\,\,| \,\,\,\pi_{k}(X)\neq 0\big\}$ and $\E(X)$ is finite.  Then \begin{equation}\label{016}
\E(X)\cong\dots\cong\E(X^{m+2})\cong\E(X^{m+1})
\end{equation}
\end{theorem}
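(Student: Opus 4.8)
The plan is to mirror the proof of Theorem \ref{t2}, but now working with the Quillen model rather than the Sullivan model, since the skeleta $X^{k}$ are what the Quillen model controls (via $H_{*}(X,\Bbb Q)\cong W_{*}$ and $\pi_{*}(X)\cong H_{*-1}(\L(W))$). By Remark \ref{r1}, the hypothesis $m=\max\{k\mid\pi_k(X)\neq0\}$ means that $H_{k-1}(\L(W))=0$ for $k>m$, so the generators $W_j$ live only in degrees up to $m-1$ in homology but the Lie generators themselves (indexed by $H_*(X,\Bbb Q)$) can persist up to degree $n-1$. The topological translation of $\E(\L(W_{\leq k}))$ is $\E(X^{k+1})$, so the chain of isomorphisms I want to produce is exactly an iterated application of Corollary \ref{c0}.

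First I would set up the dual picture to Corollary \ref{c4}: I need to know that the linear map $b_{q}:W_q\to H_{q-1}(\L(W_{\leq q-1}))$ is an isomorphism in the relevant range. Since $\pi_{q}(X)\cong H_{q-1}(\L(W))=0$ for $q>m$, the Whitehead exact sequence (\ref{23}) forces a portion of the sequence to collapse, and I expect to extract that $b_{q}$ is an isomorphism onto $H_{q-1}(\L(W_{\leq q-2}))$ for $q$ in the top range, by the same long-exact-sequence bookkeeping used in Lemma \ref{l02} and Corollary \ref{c4}. This is the Quillen analogue where the roles of odd/even and injective/surjective are replaced by the single condition coming from vanishing homotopy above degree $m$.

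Next, with $b_{q}$ an isomorphism in hand, I would apply Theorem \ref{t08} and Corollary \ref{c0} repeatedly, peeling off the top generators one degree at a time: the short exact sequence (\ref{034}) has kernel $\mathrm{Hom}(W_q,H_{q}(\L(W_{\leq k})))$, and I must check this kernel vanishes so that the surjection $\lambda$ becomes an isomorphism $\E(\L(W_{\leq q}))\cong\mathcal{R}^{q}_{k}\cong\E(\L(W_{\leq k}))$. The vanishing of the $\mathrm{Hom}$ term is where the finiteness hypothesis on $\E(X)$ enters — this is the Quillen dual of Proposition \ref{p1}. Indeed, if the kernel $\mathrm{Hom}(W_q,H_q(\L(W_{\leq k})))$ were nonzero, one could rescale a generator as in the proof of Proposition \ref{p1} and manufacture an infinite family inside $\E(\L(W))$, contradicting finiteness of $\E(X)$; so finiteness forces the appropriate $b_{q}$ to be injective and the obstruction group to vanish, collapsing each extension to an isomorphism. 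Iterating from $q=n-1$ down to $q=m$ yields the chain $\E(\L(W))\cong\E(\L(W_{\leq n-2}))\cong\dots\cong\E(\L(W_{\leq m-1}))$, which translates to (\ref{016}).

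The main obstacle I anticipate is verifying precisely that the kernel terms $\mathrm{Hom}(W_q,H_q(\L(W_{\leq k})))$ vanish at every stage of the iteration, rather than merely at the top. The finiteness of $\E(X)$ only directly controls injectivity of $b_{q}$ through the Proposition \ref{p1} mechanism, and I will need to argue, degree by degree, that each successive restriction $\E(\L(W_{\leq k}))$ remains finite (so that the hypothesis propagates down the chain) and that the homology groups $H_q(\L(W_{\leq k}))$ appearing as coefficients are themselves trivial or forced to vanish by the Whitehead sequence in the range $m<q\leq n-1$. Managing the interplay between the two vanishing conditions — the homotopy vanishing above $m$ and the finiteness-driven injectivity — to guarantee the extensions split off as genuine isomorphisms at each step is the delicate bookkeeping that the proof must carry out carefully.
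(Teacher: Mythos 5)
Your proposal follows essentially the same route as the paper: pass to the Quillen model $(\L(W_{\leq n-1}),\delta)$, use the vanishing $H_k(\L(W))=0$ for $k\geq m$ (coming from $\pi_{k}(X)=0$ for $k>m$) together with the Whitehead exact sequence (\ref{23}) to see that $b_q$ is an isomorphism for $q\geq m+1$, and then iterate Theorem \ref{t08} and Corollary \ref{c0} to peel off the top generators one degree at a time. In fact your explicit handling of the finiteness hypothesis is more careful than the paper's: the paper verifies the vanishing of the kernel $\mathrm{Hom}\big(W_{n-1},H_{n-1}(\L(W_{\leq n-2}))\big)$ only at the top step, where it comes for free from $W_n=0$, and dismisses the remaining steps with ``the same arguments,'' whereas for $m+1\leq q\leq n-2$ one has $H_q(\L(W_{\leq q-1}))\cong W_{q+1}$, which need not vanish; your argument that each kernel is a $\Q$-vector space sitting inside a group inductively known to be finite (hence must be zero, the finiteness propagating down the chain through the isomorphisms already established) is exactly what is needed there. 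One small correction: the iteration must stop at $q=m+1$, landing on $\E(\L(W_{\leq m}))\cong\E(X^{m+1})$, rather than continuing to $q=m$; the map $b_m$ is injective but need not be surjective because $H_{m-1}(\L(W))\neq 0$, so your final term $\E(\L(W_{\leq m-1}))$ overshoots the chain (\ref{016}) by one step.
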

\begin{proof}
By hypothesis the  Quillen model of $X$ has the form $(\L (W_{\leq n-1}),\delta)$, where $W_{n-1}\neq 0$ and its is  Sullivan model  has the form $(\Lambda V^{\leq m},\partial)$, where  $V^{m}\neq 0$. Recall that 
\begin{equation}\label{014}
V^{*}\cong \mathrm{Hom}\Big(H_{*-1}(\L(W_{\leq n-1})),\Bbb Q\Big)
\end{equation}
Since $n>m$ and by (\ref{014}), we deduce that 
\begin{equation}\label{015}
H_{k}(\L(W_{\leq n-1}))=0\,\,\,\,\,\,\,\,\,\,\,\,\,,\,\,\,\,\,\,\,\,\,\,\,k\geq m.
\end{equation}
it follows
\begin{equation}\label{017}
H_{n-1}(\L(W_{\leq n-1}))=0.
\end{equation}
Let us consider the  Whitehead exact sequence of $(\L(W_{\leq n-1}),\delta)$, namely
\begin{equation}\label{013}
\cdots \rightarrow H_{k}(\L(W_{\leq n-1}))\rightarrow W_{k}\overset{b_{k}}{\longrightarrow }%
H_{k-1}(\L (W_{\leq k-2}))\rightarrow H_{k-1}(\L(W_{\leq n-1}))
\overset{}{\rightarrow }\cdots
\end{equation}
which implies
$$\cdots \rightarrow W_{n}=0\overset{b_{n}}{\longrightarrow }%
H_{n-1}(\L (W_{\leq n-2}))\rightarrow H_{n-1}(\L(W_{\leq n-1}))=0\to\cdots$$
As a result,  we obtain
\begin{equation}\label{103}
H_{n-1}(\L (W_{\leq n-2}))=0
\end{equation}
Now, according to (\ref{015}), the map   $b_{n-1}$ is an isomorphism and according to Corollary  \ref{c1},  it follows that
$$\mathcal{R}^{n-1}_{n-2}\cong \E(\L (W_{\leq n-2})).$$ 
Using  Theorem   \ref{t8}  (for $q=n-1$ and $k=n-2$) and (\ref{103}), we conclude that $\mathcal{E}(\L( W_{\leq n-1}))\overset{}{
	\cong}\mathcal{R}^{n-1}_{n-2}$. Consequently $\mathcal{E}(\L( W_{\leq n-1}))\cong \mathcal{E}(\L( W_{\leq n-2}))$

\noindent Using the same arguments  and taking in account the relation
(\ref{015}) which implies that  $b_{k}$  is an isomorphism for $k\geq m+1$,  by iterating the above process it follows that
$$\mathcal{E}(\L( W_{\leq n-1}))\cong \mathcal{E}(\L( W_{\leq n-2}))\cong\dots\cong\mathcal{E}(\L( W_{\leq m}))$$
Finally,  by the properties of the Sullivan and Quillen models and taking into  consideration Theorem \ref{t0} we obtain (\ref{016})
\end{proof}

\begin{theorem}\label{t06}
	Let $X$ be  a $\rm{2}$-connected rational  elliptic  space of formal dimension $n$.  If $\pi_{n}(X)\neq 0$, then the group  $\E(X)$ is infinite.
\end{theorem}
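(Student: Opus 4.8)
The plan is to translate the hypothesis $\pi_n(X) \neq 0$ into a statement about the Sullivan model and then invoke \propref{p1} contrapositively. Since $X$ is elliptic of formal dimension $n$, \remref{r1} tells us its Sullivan model is $(\Lambda V^{\leq 2n-1}, \partial)$, and the identification $V^k \cong \Hom(\pi_k(X), \Q)$ shows that $\pi_n(X) \neq 0$ is equivalent to $V^n \neq 0$. The whole argument will hinge on showing that the linear map $b^n : V^n \to H^{n+1}(\Lambda V^{\leq n-1})$ is \emph{not} injective, for then \propref{p1} immediately forces $\E(\Lambda V^{\leq n})$ to be infinite, and by \thmref{t0} (or equivalently \thmref{t2}) we have $\E(X) \cong \E(\Lambda V^{\leq n})$, yielding the conclusion.

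First I would set up the Whitehead exact sequence \eqref{17} around degree $n$, namely
\begin{equation*}
V^n \overset{b^n}{\longrightarrow} H^{n+1}(\Lambda V^{\leq n-1}) \to H^{n+1}(\Lambda V) \to V^{n+1} \overset{b^{n+1}}{\longrightarrow} \cdots
\end{equation*}
and exploit that $n$ is the formal dimension, so $H^{n+1}(\Lambda V) = 0$. This makes the map $H^{n+1}(\Lambda V^{\leq n-1}) \to H^{n+1}(\Lambda V)$ zero, hence by exactness $b^n$ is surjective onto $H^{n+1}(\Lambda V^{\leq n-1})$. Therefore $b^n$ injective would be equivalent to $b^n$ being an isomorphism, i.e. to $\dim V^n = \dim H^{n+1}(\Lambda V^{\leq n-1})$. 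The strategy is then to show these dimensions cannot match, so $b^n$ must have nontrivial kernel.

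The main obstacle is producing the required strict inequality $\dim V^n > \dim H^{n+1}(\Lambda V^{\leq n-1})$, or more directly exhibiting a nonzero element in $\ker b^n$. Here I expect to use the numerical constraints of \thmref{t1} together with the $2$-connectedness hypothesis. Since $X$ is $2$-connected, $V$ is concentrated in degrees $\geq 3$, and the formal-dimension formula $n = \sum |x_j| - \sum(|y_j| - 1)$ from part (2), combined with part (3), tightly controls where the top generators sit. A clean way to extract a class in $\ker b^n$ is to locate a top-degree odd generator $x$ with $|x| = n$ (so $V^n \neq 0$ is witnessed by $x$) and observe that $\partial x$, being a decomposable element of $(\Lambda V^{\leq n-1})^{n+1}$, must represent a coboundary because the Poincar\'e duality / fundamental-class structure of an elliptic space of formal dimension $n$ (\thmref{t1}) leaves no room for a nonzero cohomology class it could hit; concretely $H^{n+1}(\Lambda V^{\leq n-1})$ is too small relative to $V^n$.

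Thus the key steps, in order, are: (i) rewrite $\pi_n(X) \neq 0$ as $V^n \neq 0$ via the Sullivan identifications; (ii) use $H^{n+1}(\Lambda V) = 0$ and the Whitehead sequence \eqref{17} to see that $b^n$ is surjective, so its injectivity is equivalent to an equality of dimensions; (iii) apply the ellipticity constraints of \thmref{t1} (and the $2$-connected hypothesis) to show this equality fails, exhibiting $0 \neq v \in \ker b^n$; (iv) feed $\ker b^n \neq 0$ into \propref{p1} to conclude $\E(\Lambda V^{\leq n})$ is infinite; and (v) conclude via \thmref{t0} that $\E(X) \cong \E(\Lambda V^{\leq n})$ is infinite. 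Step (iii) is where the real content lies and where I would spend the most care, since it is precisely the dimension count that the topological hypothesis $\pi_n(X) \neq 0$ is designed to obstruct.
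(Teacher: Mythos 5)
Your overall strategy---reduce to $\E(\Lambda V^{\leq n})$ via \thmref{t2}, then apply \propref{p1} contrapositively by showing $b^{n}$ is not injective---is genuinely different from the paper's, and unfortunately its central step (iii) is not merely left unproved: it is false. Take $X$ rationally equivalent to $S^{3}\times S^{4}$, with minimal model $\Lambda(x_{3},y_{4},x_{7})$, $\partial x_{3}=\partial y_{4}=0$, $\partial x_{7}=y_{4}^{2}$. This is $2$-connected, elliptic of formal dimension $n=7$, and $\pi_{7}(X)\neq 0$. Here $H^{8}(\Lambda V^{\leq 6})=H^{8}(\Lambda(x_{3},y_{4}))=\Q\,[y_{4}^{2}]$ and $b^{7}(x_{7})=[y_{4}^{2}]$, so $b^{7}\colon V^{7}\to H^{8}(\Lambda V^{\leq 6})$ is an isomorphism. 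Thus the strict inequality $\dim V^{n}>\dim H^{n+1}(\Lambda V^{\leq n-1})$ you hope to extract from \thmref{t1} fails; \propref{p1} is only a one-way implication, so an injective $b^{n}$ gives you nothing, and your argument cannot conclude even though $\E(X)$ is of course infinite in this example. Your observation that $b^{n}$ is surjective (from $H^{n+1}(\Lambda V)=0$ and the sequence \eqref{17}) is correct but does not help: it shows injectivity is equivalent to an equality of dimensions, and that equality can and does hold. A further warning sign is that your outline makes almost no essential use of the $2$-connectedness hypothesis, which is where the paper's proof does real work.

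For comparison, the paper exploits a different piece of the exact sequence of \thmref{t8}: the kernel term $\Hom\big(V^{n},H^{n}(\Lambda V^{\leq n-1})\big)$, with cohomology in degree $n$ rather than $n+1$. Arguing by contradiction, if $\E(X)$ (hence $\E(\Lambda V^{\leq n})$) were finite this kernel would vanish, and since $V^{n}\cong\Hom(\pi_{n}(X),\Q)\neq 0$ this forces $H^{n}(\Lambda V^{\leq n-1})=0$. The $2$-connectedness is then used through \thmref{00} to transport this vanishing to the Quillen model, making the target of $b_{n-1}$ in the diagram defining $\mathcal{R}^{n-1}_{n-2}$ zero; the compatibility condition becomes vacuous, so $\mathcal{R}^{n-1}_{n-2}$ contains all of $\aut(W_{n-1})$ with $W_{n-1}\neq 0$ and is infinite, contradicting finiteness via \thmref{t08}. (In the $S^{3}\times S^{4}$ example the contradiction already appears at the first step, since $H^{7}(\Lambda(x_{3},y_{4}))=\Q\,[x_{3}y_{4}]\neq 0$.) To salvage your outline you would need to replace step (iii) by this degree-$n$ kernel argument rather than the non-injectivity of $b^{n}$.
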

\begin{proof}
Since the formal dimension of $X$  is $n$, we can choose $(\L (W_{\leq
	n-1}),\delta)$ as the Quillen model of $X$ with $W_{n-1}\neq 0$ and $(\Lambda V^{\leq 2n-1},\partial)$ as its   Sullivan model.

\noindent Next, taking  $q=n-1 $ and $k=n-2$ in  (\ref{034}), we derive the following short exact sequence
	\begin{equation}\label{021}
	\mathrm{Hom}\big(W_{n-1}, H_{n-1}(\L (W_{\leq
		n-2}))\big) \rightarrowtail
	\mathcal{E}(\L( W_{\leq n-1}))\overset{}{
		\twoheadrightarrow}\mathcal{R}^{n-1}_{n-2}
	\end{equation}
Assume by contradiction that $\E(X)$ is finite. By Theorem \ref{t2}, we have $\E(\Lambda V^{\leq 2n-1})\cong \E(\Lambda V^{\leq n})$, so $\E(\Lambda V^{\leq n})$ is also finite and  the short exact sequence   (\ref{06})   implies that 
 $ H^{n}(\Lambda V^{\leq n-1})=0$ because $V^n\cong \pi_{n}(X)\neq 0$.  Taking in account that $X$ is 2-connected,  the formula (\ref{24}) implies that $H^{n}(\Lambda V^{\leq n-1})\cong H_{n-2}(\L (W_{\leq
		n-3 }))$. So $H_{n-2}(\L (W_{\leq
		n-3 }))=0$. Now recall that $\mathcal{R}^{n-1}_{n-2}$ is  
	the subgroup of $\aut(W_{n-1})\times \mathcal{E}(\L (W_{\leq
		n-3 }))$ consisting of the pairs $(\xi,[\alpha])$  making  the following
	diagram commutes
	
	\begin{picture}(300,90)(0,30)
	\put(60,100){$W_{n-1}\hspace{1mm}\vector(1,0){140}\hspace{1mm}W_{n-1}$}
	\put(69,76){\scriptsize $b_{n-1}$} \put(238,76){\scriptsize $b_{n-1}$}
	\put(66,96){$\vector(0,-1){38}$} \put(235,96){$\vector(0,-1){38}$}
	\put(155,103){\scriptsize $\xi$} \put(145,52){\scriptsize
		$H_{n-2}(\alpha)$} \put(35,46){$H_{n-2}(\L (W_{\leq n-3}))
		\hspace{1mm}\vector(1,0){110}\hspace{1mm}H_{n-2}(\L (W_{\leq n-3}))
		\hspace{1mm}$}
	\end{picture}

	\noindent As $H_{n-2}(\L (W_{\leq
		n-3 }))=0$, we deduce that $\mathcal{R}^{n-1}_{n-2}=\aut(W_{n-1})\times \mathcal{E}(\L (W_{\leq
		n-3 }))$  implying that  $\mathcal{R}^{n-1}_{n-2}$ is infinite and by (\ref{021}) the group 	$\mathcal{E}(\L( W_{\leq n-1}))$ is also infinite contradicting the fact that $\E(X)\cong 	\mathcal{E}(\L( W_{\leq n-1}))$ is finite.
\end{proof}

\subsection{ Self-closeness number $N\E(X)$} 
For a  space $X$, let  $[X,X]$ denote the monoid of homotopy classes of  self-maps of the  space $X$ and let
$$\A_{\#}^{k}(X)=\Big\{f\in[X,X]\,\mid\,\pi_{i}(f):\pi_{i}(X)\overset{\cong}{\longrightarrow} \pi_{i}(X) \text{ for any } i\leq k\}$$ 
In \cite{CL}, Choi and Lee  introduced the following concept:
\begin{definition}
	The  self-closeness number, denoted by $N\E(X)$, is defined as follows
	$$N\E(X)=min\Big\{k\,\mid\,\A_{\sharp}^{k}(X)=\E(X)\Big\}$$
\end{definition}
\begin{theorem}\label{t}
	If $X$ is a simply connected rational  elliptic  space of formal dimension $n$, then 
	$$N\E(X)\leq n$$
\end{theorem}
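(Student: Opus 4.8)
The plan is to unwind the definition and reduce everything to a statement about the linear part of a Sullivan endomorphism. Since any self-homotopy equivalence induces isomorphisms on all homotopy groups, we always have $\E(X)\subseteq\A_{\#}^{k}(X)$; hence $N\E(X)\leq n$ is equivalent to the single inclusion $\A_{\#}^{n}(X)\subseteq\E(X)$, i.e.\ to the assertion that \emph{every} self-map $f\colon X\to X$ inducing isomorphisms on $\pi_{i}(X)$ for all $i\leq n$ is a homotopy equivalence. Passing to the Sullivan model $(\Lambda V,\partial)$ of $X$, such an $f$ corresponds to a cochain endomorphism $\phi$ of $(\Lambda V,\partial)$, and under the identification $V^{i}\cong\Hom(\pi_{i}(X),\Q)$ the condition that $\pi_{i}(f)$ be an isomorphism translates into the induced map $\widetilde{\phi}^{i}$ on the indecomposables $V^{i}$ being an isomorphism. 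Because a morphism of \emph{minimal} Sullivan algebras is a homotopy equivalence precisely when its linear part is an isomorphism in every degree, the whole theorem reduces to showing that $\widetilde{\phi}^{i}$ is an isomorphism for all $i$, knowing it for $i\leq n$.

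By \remref{r1} the model is $(\Lambda V^{\leq 2n-1},\partial)$, so only the degrees $n<i\leq 2n-1$ remain to be treated. For such $i$ even, \corref{c3} gives $V^{i}=0$, so $\widetilde{\phi}^{i}$ is trivially an isomorphism. For $i$ odd, \corref{c4} tells us that $b^{i}\colon V^{i}\to H^{i+1}(\Lambda V^{\leq i-1})$ is an isomorphism. The decisive point is the naturality of $b$ with respect to the endomorphism $\phi$: for degree reasons $\phi(V^{\leq i-1})\subseteq\Lambda V^{\leq i-1}$, so $\phi$ restricts to $\phi_{i-1}$ on $\Lambda V^{\leq i-1}$, and a direct computation with $\phi(v)=\widetilde{\phi}^{i}(v)+(\text{decomposables})$ and $\partial\phi(v)=\phi(\partial v)$ shows that the square in equation~(\ref{13}) commutes for the pair $(\widetilde{\phi}^{i},\phi_{i-1})$, that is $b^{i}\circ\widetilde{\phi}^{i}=H^{i+1}(\phi_{i-1})\circ b^{i}$. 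I expect this naturality to be the main obstacle, since \defref{d3} and \thmref{t8} phrase the commuting square only for self-\emph{equivalences}, whereas here $\phi$ is a priori only an endomorphism; conceptually it amounts to the fact that $\phi$ induces a morphism of the Whitehead exact sequence (\ref{17}) and that $b^{*}$ is a natural transformation in it.

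With naturality in hand, since $b^{i}$ is an isomorphism the relation $\widetilde{\phi}^{i}=(b^{i})^{-1}\circ H^{i+1}(\phi_{i-1})\circ b^{i}$ shows that $\widetilde{\phi}^{i}$ is an isomorphism as soon as $H^{i+1}(\phi_{i-1})$ is. I would then close the argument by induction on $i$. The base case $i\leq n$ is the hypothesis $f\in\A_{\#}^{n}(X)$. For the inductive step, assume $\widetilde{\phi}^{j}$ is an isomorphism for all $j\leq i-1$; then $\phi_{i-1}$ is an endomorphism of the minimal algebra $(\Lambda V^{\leq i-1},\partial)$ whose linear part is an isomorphism, hence $\phi_{i-1}$ is itself an isomorphism of cochain algebras and in particular $H^{i+1}(\phi_{i-1})$ is an isomorphism. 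By the previous paragraph $\widetilde{\phi}^{i}$ is an isomorphism, completing the induction up to degree $2n-1$ and therefore in all degrees. Consequently $\phi$ has an isomorphism as linear part in every degree, so $\phi$ is a homotopy equivalence and $f\in\E(X)$. This yields $\A_{\#}^{n}(X)\subseteq\E(X)$, and hence $N\E(X)\leq n$.
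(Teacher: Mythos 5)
Your proof is correct, but it takes a genuinely different route from the paper's. The paper deduces the theorem from \thmref{t0}: it notes that $\A_{\#}^{n}(\Lambda V^{\leq n})=\E(\Lambda V^{\leq n})$, embeds this set into $\A_{\#}^{2n-2}(\Lambda V)$ via a map $\theta$ that extends a self-equivalence of the truncation by the identity on the remaining generators, and then closes the chain of inclusions using $\E(\Lambda V)\cong\E(\Lambda V^{\leq n})$. You instead work directly with an arbitrary cochain endomorphism $\phi$ of the full minimal model $(\Lambda V^{\leq 2n-1},\partial)$ and run an induction on the degree, using \corref{c3} ($V^{i}=0$ for $i>n$ even) and \corref{c4} ($b^{i}$ an isomorphism for $i>n$ odd) together with the naturality relation $b^{i}\circ\widetilde{\phi}^{i}=H^{i+1}(\phi_{i-1})\circ b^{i}$ to propagate the isomorphism property of the linear part from degrees $\leq n$ to all degrees. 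The point you flag --- that \defref{d3} and \thmref{t8} state the commuting square only for self-equivalences, whereas you need it for arbitrary endomorphisms --- is real but harmless: since $\phi(v)=\widetilde{\phi}^{i}(v)+(\text{decomposable terms in }\Lambda V^{\leq i-2})$ and $\partial\phi(v)=\phi(\partial v)$ with $\partial v\in\Lambda V^{\leq i-1}$, the decomposable part contributes a coboundary and the relation holds for any cochain endomorphism, exactly as you sketch. Your route buys a self-contained argument that avoids the extension map $\theta$ (which, as written in the paper, requires justifying that \emph{extend by the identity} commutes with the differential) and makes transparent exactly where ellipticity enters; the paper's route buys brevity by reusing \thmref{t0} wholesale. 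Both ultimately rest on the same facts about the obstruction maps $b^{i}$ in degrees above the formal dimension.
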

\begin{proof}
	As  $X$  is of  formal dimension $n$, by remark \ref{r1} its Sullivan  model has  the form  $(\Lambda V,\partial)=(\Lambda V^{\leq 2n-1},\partial)$. Define
	$$\A_{\#}^{k}(\Lambda V)=\Big\{[\alpha]\in[\Lambda V,\Lambda V]\,\mid\,\widetilde{\alpha}^{i}:V^{i}\overset{\cong}{\longrightarrow} V^{i} \text{ for any }  i\leq k\Big\},$$
	where 	$[\Lambda V,\Lambda V]$ denotes the monoid of homotopy cochain algebras of  self-equivalences classes  of $\Lambda V$ and where  $\widetilde{\alpha}$  is the graded linear isomorphism induced by $\alpha$ on the graded vector space of  indecomposables $V$ (see (\cite{FHT} 12,15.(d)).
	
	\noindent Clearly by virtue on the properties of the Sullivan model we can identify the two  sets  $\A_{\#}^{k}(\Lambda V)$ and $\A_{\#}^{k}(X)$.
	
	\noindent  First we have  $\A_{\#}^{2n-2}(\Lambda V)=\E(\Lambda V)$ and $\A_{\#}^{n}(\Lambda V^{\leq n})=\E(\Lambda V^{\leq n})$.  Next  it is easy to see that the set $\A_{\#}^{n}(\Lambda V^{\leq n})$ can be identified as a subset of   $\A_{\#}^{2n-1}(\Lambda V)$ by considering 
	the following  injective map
	\begin{equation}\label{1}
	\theta:\A_{\#}^{n}(\Lambda V^{\leq n}) \hookrightarrow\A_{\#}^{2n-2}(\Lambda V)\,\,\,,\,\,\,[\alpha]\longmapsto \theta([\alpha])=[\beta]
	\end{equation}
	where $\beta=\alpha$ on $V^{\leq n}$ and $\beta=id$ on $V^{> n}$.  
	Finally using Theorem \ref{t0} and (\ref{1})  we get 
	$$\A_{\#}^{n}(\Lambda V^{\leq n})\subseteq\A_{\#}^{2n-2}(\Lambda V)=\E(\Lambda V)=\E(\Lambda V^{\leq n})=\A_{\#}^{n}(\Lambda V^{\leq n})$$
	Therefore 
	$\A_{\#}^{n}(\Lambda V^{\leq n})=\E(\Lambda V)$ implying that $N\E(\Lambda V)\leq n$.
\end{proof}
\begin{example}\label{e2}
Given a finite group $G$. According to a Theorem of Frucht \cite{F},  there exists  a  connected finite graph $\G=(V,E)$,  where $V$ denotes the set of the  vertices of $\G$ and $E$  the set of its edges, such that $\aut(\G)\cong G$. Recall that in \cite{CV} Costoya and Viruel  constructed a free  commutative differential graded algebra  $$\big(\Lambda(x_{1},x_{2},y_{1},y_{2},y_{3},z,\{z_{v},x_{v}\}_{v\in V}),\partial\big)$$ 
 where the degrees of the elements  are  $$|x_{1}|=8\,\,,\,\,\,\,\,\,\,\,\,\,\,\,\,\,\,|x_{2}|=10\,\,\,\,\,\,\,\,,\,\,\,\,\,\,\,\,\, |x_{v}|=40\,\,\,\,\,\,\,\,,\,\,\,\,\,\,\,\,\, |z|=|z_{v}|=119$$ 
$$|y_{1}|=33\,\,\,\,\,\,\,\,,\,\,\,\,\,\,\,\,\,|y_{2}|=35\,\,\,\,\,\,\,\,,\,\,\,\,\,\,\,\,\,|y_{3}|=37$$
and where the differential is given by
\begin{eqnarray}
\partial(x_{1})&\hspace{-2mm}=&\hspace{-2mm}\partial(x_{2})=\partial(x_{v})=0\,,\,\,\,\,\,\partial(y_{1})=x^{3}_{1}x_{2},\,\,\,\,\,\partial(y_{2})=x^{2}_{1}x^{2}_{2},\,\,\,\,\,\partial(y_{3})=x_{1}x^{3}_{2}\nonumber\\
\partial (z_{v})&\hspace{-2mm}=&x_{v}^{3}+\overset{}{{\underset{(v,w)\in E}{\sum}}}x_{v}x_{w}x^{4}_{2}\,\,\,,\,\,\,\nonumber\\
\partial (z)&\hspace{-2mm}=&y_{1}y_{2}x^{4}_{1}x^{2}_{2}-y_{1}y_{3}x^{5}_{1}x_{2}+y_{2}y_{3}x^{6}_{1}+x^{15}_{1}+x^{12}_{2}\nonumber
\end{eqnarray}
and  proved that $\E(\Lambda(x_{1},x_{2},y_{1},y_{2},y_{3},\{z_{v},w_{v}\}_{v\in V}))\cong G$. 
Moreover they showed that $\big(\Lambda(x_{1},x_{2},y_{1},y_{2},y_{3},\{z_{v},w_{v}\}_{v\in V}),\partial\big)$  is an elliptic  of formal
dimension $n = 208 + 80|V|$ with $|V|$ the order of the graph $\G$.

Let $X$ be a rational space whose admits $\big(\Lambda(x_{1},x_{2},y_{1},y_{2},y_{3},z,\{z_{v},x_{v}\}_{v\in V}),\partial\big)$ as Sullivan model. On one hand and as  $X$ has formal dimension $n = 208 + 80|V|$  the Quillen model of $X$ can be written as  $(\L (W_{\leq 207 + 80|V|}),\delta)$ and because $X=X^{[120]}$ we deduce that  $N\E(X)\leq  120$. 

\noindent On the other hand we have $m=max\big\{k \,\,\,| \,\,\,\pi_{k}(X)\neq 0\big\}=max\big\{k \,\,\,| \,\,\,V^{k}\neq 0\big\}=119.$
 Therefore  applying Theorem \ref{t09} leads to
$$\E(X)\cong \E(X^{207 + 80|V|})\cong\dots\cong\E(X^{120})\cong G$$
Therefore we can ameliorate Costoya and Viruel  Theorem by reducing the formal dimension of  $X$  showing  that every finite group $G$ occurs as $G= \E(X)$ for some   rational space  $X^{120}$ having  formal dimension $n=120$. 
\end{example}
\begin{remark}
It is  important to notice 	that the space  $Z=X^{120}$ is not elliptic. Indeed, if $Z$ were elliptic, $H^{*}(Z,\Bbb Q)$ would be Poincar\'{e} duality (\cite{FHT}, Theorem A), and thus $\dim\,(H^{120}(Z,\Bbb Q))=1$. But an easy computation shows that  $\dim\,(H^{120}(Z,\Bbb Q))\geq |V|>1$.
	
\end{remark}
\begin{example}\label{e3}
In (\cite{AL1}, Example 5.2),  Arkowitz and Lupton constructed a free   commutative differential graded algebra   $\big(\Lambda(x_{1},x_{2},y_{1},y_{2},y_{3},z),\partial\big)$ with  $|x_{1}|=10$, $|x_{2}|=12$, $|y_{1}|=41$, $|y_{2}|=43$, $|y_{3}|=45$ and  $|z|=119$. The differential is given by
	\begin{eqnarray}
	\partial(x_{1})&\hspace{-2mm}=&\hspace{-2mm}\partial(x_{2})=0\,,\,\,\,\,\,\partial(y_{1})=x^{3}_{1}x_{2},\,\,\,\,\,\partial(y_{2})=x^{2}_{1}x^{2}_{2},\,\,\,\,\,\partial(y_{3})=x_{1}x^{3}_{2}\nonumber\\
	\partial (z)&\hspace{-2mm}=&y_{1}y_{2}x^{3}_{2}-y_{1}y_{3}x_{1}x_{2}^2+y_{2}y_{3}x^{2}_{1}x^{}_{2}+x^{12}_{1}+x^{10}_{2}\nonumber
	\end{eqnarray}
	and  proved that  $\E(\Lambda(x_{1},x_{2},y_{1},y_{2},y_{3},z)\cong\Bbb Z_{2}$.

\noindent 	Moreover they showed that $\big(\Lambda(x_{1},x_{2},y_{1},y_{2},y_{3},z),\partial\big)$  is an elliptic  of formal
	dimension $188$.
	
If $X$ is a rational space whose admits $\big(\Lambda(x_{1},x_{2},y_{1},y_{2},y_{3},z),\partial\big)$ as Sullivan model, then  its Quillen model can be written as  $(\L (W_{\leq 187}),\delta)$.  Since  $m=max\big\{k \,| \,\pi_{k}(X)\neq 0\big\}=max\big\{k \,| \,V^{k}\neq 0\big\}=119.$
	Therefore  applying Theorem \ref{t09} leads to
	$$\E(X)\cong \E(X^{187})\cong\dots\cong\E(X^{120})\cong \Bbb Z_{2}$$
\end{example}
\begin{example}\label{e4}
Define  $(\Lambda(x,y),\partial)$ with  $|x|=2p$ . The differential is given by
	\begin{equation}
	\partial(x)=0\,\,\,\,\,\,\,\,\,\,\,\,\,\,\,\,\,\,,\,\,\,\,\,\,\,\,\,\,\,\,\,\,\,\partial(y_{})=x^{a}\,\,\,\,\,\,\,\,\,\,\,\,\,\,\,\,\,\,,\,\,\,\,\,\,\,\,\,\,\,\,\,\,\,a\geq 2
	\end{equation}
Obviously 	$(\Lambda(x,y),\partial)$  is elliptic and it easy to see that its  formal
	dimension is $n=2(a-1)p$ and $m=max\big\{k \,| \,V^{k}\neq 0\big\}=2ap-1,$  so $m>n$. Now if $X$ is a rational space having  $(\Lambda(x,y),\partial)$ as the Sullivan model, then   by   applying Theorem \ref{t0} we get 
	$\E(X)\cong \E(\Lambda(x))\cong \Bbb Q-\{0\}.$
\end{example}

\vspace{.5cm}

\bibliographystyle{amsplain}

\end{document}